\date{}
\renewcommand{\uppercasenonmath}[1]{}
\theoremstyle{plain}
\newtheorem{theorem}{Theorem}[section]
\newtheorem{proposition}[theorem]{Proposition}
\newtheorem{lemma}[theorem]{Lemma}
\newtheorem{corollary}[theorem]{Corollary}
\theoremstyle{definition}
\newtheorem{example}[theorem]{Example}
\newtheorem{definition}[theorem]{Definition}
\newtheorem{question}[theorem]{Open Question}
\theoremstyle{definition}
\newtheorem*{acknowledgement}{Acknowledgement}
\theoremstyle{remark}
\newtheorem{remark}[theorem]{Remark}
\newcommand{\pf}{\noindent\begin {proof}}
\newcommand{\epf}{\end{proof}}
\newcommand{\Ker}{\mbox{\rm Ker}}
\newcommand{\Ext}{\mbox{\rm Ext}}
\newcommand{\Hom}{\mbox{\rm Hom}}
\newcommand{\Tor}{\mbox{\rm Tor}}
\newcommand{\Prufer}{Pr\"{u}fer}
\def\p{{\frak p}}
\def\fd{{\rm fd}}
\def\pd{{\rm pd}}
\def\cwd{{\rm w.gl.dim}}
\def\cgd{{\rm gl.dim}}
\def\Hom{{\rm Hom}}
\def\Ext{{\rm Ext}}
\def\Tor{{\rm Tor}}
\def\fkm{{\frak m}}
\def\ker{{\rm ker}}
\def\Ker{{\rm Ker}}
\def\Coker{{\rm Coker}}
\def\Nil{{\rm Nil}}
\def\NN{{\rm NN}}
\def\NP{{\rm NP}}
\def\gl{{\rm gl.dim}}
\def\Z{{\rm Z}}
\def\T{{\rm T}}
\def\ZN{{\rm ZN}}
\def\Krull{{\rm Krull}}
\def\Dedekind{{\rm Dedekind}}
\def\Prufer{{\rm Pr\"{u}fer}}
\begin{document}
\begin{center}
{\large  \bf Strongly $\phi$-flat modules, strongly nonnil-injective modules and their homological dimensions}

\vspace{0.5cm}   Xiaolei Zhang$^{a}$, Shiqi Xing$^b$, Wei Qi$^{a}$\\

{\footnotesize a.\  School of Mathematics and Statistics, Shandong University of Technology, Zibo 255000, China\\

b.\  College of Applied Mathematics, Chengdu University of Information Technology, Chengdu 610225, China\\


Corresponding Author: Xiaolei Zhang, E-mail: zxlrghj@163.com\\}
\end{center}

\bigskip
\centerline { \bf  Abstract}
\bigskip
\leftskip10truemm \rightskip10truemm \noindent

In this paper, we first introduce and study  the notions of strongly $\phi$-flat modules and strongly nonnil-injective modules. And then, we  investigate the  homological dimensions of modules and rings in terms of these two notions. Finally we give some new homological characterizations of $\phi$-Dedekind rings and  $\phi$-\Prufer\ rings.
\vbox to 0.3cm{}\\
{\it Key Words:} strongly $\phi$-flat module; strongly nonnil-injective module; $\phi$-weak global dimension; $\phi$-global dimension.\\
{\it 2020 Mathematics Subject Classification:}  13D05.

\leftskip0truemm \rightskip0truemm
\bigskip

Throughout this paper, all rings are commutative with identity and all modules are unitary.
First, we recall some notions on $\phi$-rings, which are good generalizations of integral domains,   originated from \cite{A97}. A ring $R$ is called an \emph{$\NP$-ring} if the nilpotent radical $\Nil(R)$ is a prime ideal; and a \emph{$\ZN$-ring} if $\Z(R)=\Nil(R)$ where $\Z(R)$ is the set of all zero-divisors of $R$. A prime ideal $\p$ of $R$ is called \emph{divided prime} if $\p\subsetneq (x)$, for every $x\in R-\p$. A ring $R$ is a \emph{$\phi$-ring} if $\Nil(R)$ is a divided prime ideal of $R$. Moreover, a $\ZN$ $\phi$-ring is said to be a \emph{strong $\phi$-ring}.  Many well-known notions of integral domains have the corresponding analogues in  the class of $\phi$-rings, such as valuation domains, \Dedekind\ domains,  \Prufer\ domains,  Noetherian domains, coherent domains, Bezout domains and Krull domains (see \cite{FA04,FA05,aa16,A03,ALT06}).

The studies of $\phi$-rings from the moduletic  viewpoint started from  Yang \cite{Y06}, who introduced the notion of nonnil-injective modules by replacing the ideals in Baer's criterion for injective modules with nonnil ideals. Dually, Zhao et al. \cite{ZWT13} defined the $\phi$-flat modules in terms of nonnil ideals and $\Tor$-functors. They also gave the conceptions of \emph{$\phi$-von Neumann rings}  over which any module is $\phi$-flat, and then showed that a $\phi$-ring $R$ is  $\phi$-von Neumann if and only if its \Krull\ dimension is $0$,  if and only if $R/\Nil(R)$ is a von Neumann regular ring. In 2018, Zhao \cite{Z18} gave a homological characterization of $\phi$-\Prufer\ rings: a strong $\phi$-ring $R$ is $\phi$-\Prufer\ if and only if each submodule of a $\phi$-flat module is $\phi$-flat, if and only if each nonnil ideal of $R$ is $\phi$-flat. Recently, the first author and Qi \cite{ZQ22-p-dp} characterized $\phi$-von Neumann rings and $\phi$-Dedekind rings in terms of nonnil-injective modules.

It is well-known that the class of flat modules (resp., injective) modules resolving (resp., coresolving). These properties of a given class of  $R$-modules are very crucial to study the homology dimensions (see \cite{EJ00}). So it is natural and worth to ask that: Is the class of $\phi$-flat (resp., nonnil-injective) modules also resolving (resp., coresolving)? The original motivation of this paper is to investigate this question. Actually, we deny these for both  $\phi$-flat modules and nonnil-injective modules  (see Example \ref{flat-nots}, Example \ref{inj-nots}). So we introduce the notions of  strongly $\phi$-flat modules and strongly nonnil-injective modules to fill this gap (see Definition \ref{nonnil-regu}).  The new notions and the old ones are consistent over $\ZN$-ring (see Theorem \ref{sp-ss}). It was proved in \cite{ZZW21} that a $\phi$-ring $R$ is an integral domain if and only if every $\phi$-flat module is flat. However, it does not hold for strongly $\phi$-flat modules  (see Example \ref{flat-nots-5}).
We introduce the $\phi$-flat dimensions and $\phi$-injective dimensions of $R$-modules, and investigate the $\phi$-weak global dimensions and  $\phi$-global dimensions  of rings. It is worth to note that we characterize $\phi$-rings with $\phi$-weak global dimensions and $\phi$-global dimensions  at most $1$.

\section{strongly $\phi$-flat modules and strongly nonnil-injective modules }

Let $R$ be an $\NP$-ring.  Then the set of all nonnil ideals is closed under multiplication. So we always suppose $R$ is an $\NP$-ring from now on. Let $M$ be an $R$-module.  Set
\begin{center}
$\phi$-$tor(M)=\{x\in M|Ix=0$ for some  $I\in \NN(R)\}$.
\end{center}
An $R$-module $M$ is said to be \emph{$\phi$-torsion} (resp., \emph{$\phi$-torsion free}) provided that  $\phi$-$tor(M)=M$ (resp., $\phi$-$tor(M)=0$). Then the classes of $\phi$-torsion modules and $\phi$-torsion free modules constitute a hereditary torsion theory of finite type.

Recall from \cite{ZWT13,ZZ19} that an $R$-module $M$ is called \emph{$\phi$-flat} if $\Tor^R_1(T,M)=0$ for any $\phi$-torsion module $T$; and $M$ is called \emph{nonnil-injective} if $\Ext_R^1(T,M)=0$ for any $\phi$-torsion module $T$. It is well-known that and $R$-module $M$ is  $\phi$-flat  if and only if $\Tor^R_1(R/I,M)=0$  for any (finitely generated) nonnil ideal $I$ of $R$; and $M$ is  nonnil-injective if and only if $\Ext_R^1(R/I,M)=0$ for any nonnil ideal $I$ of $R$.

It is well-known that the class of flat modules is resolving, that is, it contains all projective modules and is closed under direct summands, extensions and kernels of surjective homomorphisms; and  the class of  injective modules is coresolving, that is, it contains all injective modules and is closed under direct summands, extensions and cokernels of injective homomorphisms. And so it is ubiquitous to study modules and rings by using flats and injectives.  So it is natural and worth to ask that:

 \textbf{Is the class of all $\phi$-flat (resp., nonnil-injective) modules resolving (resp., coresolving)?}

To give a negative answer of this question, we first recall the ideal construction given in \cite{DW09}. Let $R$ be a ring and $M$ an $R$-module. Set $R(+)M$ be an $R$-module isomorphic to $R\oplus M$, and define
\begin{enumerate}
    \item ($r,m$)+($s,n$)=($r+s,m+n$),
    \item  ($r,m$)($s,n$)=($rs,sm+rn$).
\end{enumerate}
Then $R(+)M$ become a commutative ring with identity $(1,0)$.

Now, we are ready to give the example to show if $0\rightarrow A\rightarrow B\rightarrow C\rightarrow  0$ is an exact sequence with $B$ and $C$ $\phi$-flat, then $A$ is not necessary $\phi$-flat. Specially, the class of all $\phi$-flat  modules is not resolving.

\begin{example}\label{flat-nots}
Let $\mathbb{Z}$ be the ring of all integers with $\mathbb{Q}$ its quotients field, and  $\mathbb{Z}(\p^{\infty}):=\{\frac{n}{\p^k}+\mathbb{Z}\mid \frac{n}{\p^k}+\mathbb{Z} \in \mathbb{Q}/\mathbb{Z}\}$  the $\p$-Pr\"{u}fer group with $\p$ a prime in $\mathbb{Z}$.
Set $R=\mathbb{Z}(+)\mathbb{Z}(\p^{\infty})$ the trivial extension of $\mathbb{Z}$ with $\mathbb{Z}(\p^{\infty})$. Since $\mathbb{Z}(\p^{\infty})$ is a divisible, we have $R$ is a $\phi$-ring by \cite[Corollary 2.4]{KKM21} where $\Nil(R)= 0(+)\mathbb{Z}(\p^{\infty})$, and so $R/\Nil(R)$ is $\phi$-flat since $\Tor_1^R(R/I,R/\Nil(R))=(I\cap\Nil(R))/I\Nil(R)=0$ for any nonnil ideal $I$ of $R$. However, we claim that $\Nil(R)$ is not  $\phi$-flat. Indeed, let $I=\langle (\p,0)\rangle$. Then $I$ is nonnil. The claim follows by the following isomorphisms (see \cite[Proposition 1]{H60}):
 \begin{align*}
 & \Tor^R_1(R/I,\Nil(R)) \\
 \cong &\{(0,m)\in  0(+)\mathbb{Z}(\p^{\infty})\mid (\p,0)(0,m)=0\}/(0:_R(\p,0))\cdot 0(+)\mathbb{Z}(\p^{\infty}) \\
  \cong &0(+)\mathbb{Z}(\p^1)/0(+)\mathbb{Z}(\p^1)\cdot 0(+)\mathbb{Z}(\p^{\infty}) \\
   \cong&0(+)\mathbb{Z}(\p^1)\not=0,
\end{align*}
where $\mathbb{Z}(\p^1):=\{\frac{n}{p}+\mathbb{Z}\in  \mathbb{Z}(\p^{\infty})\mid n\ \mbox{is an integer}\}$ is a subgroup of $\mathbb{Z}(\p^{\infty})$.
\end{example}

The following example  also shows that if $0\rightarrow A\rightarrow B\rightarrow C\rightarrow  0$ is an exact sequence with $A$ and $B$ nonnil-injective, then $C$ is not necessary nonnil-injective. Specially, the class of all nonnil-injective  modules is not coresolving.
\begin{example}\label{inj-nots} Consider the above Example \ref{flat-nots}. Let $E:=\Hom_{\mathbb{Z}}(R/\Nil(R),\mathbb{Q}/\mathbb{Z})$. Then $E$ is nonnil-injective by \cite[Proposition 1.4]{ZQ22-p-dp}. However, we claim the quotient $\Hom_{\mathbb{Z}}(\Nil(R),\mathbb{Q}/\mathbb{Z})$ of  the injective module $\Hom_{\mathbb{Z}}(R,\mathbb{Q}/\mathbb{Z})$ by $E$  is not nonnil-injective. Indeed,
$$\Ext_R^1(R/I,\Hom_{\mathbb{Z}}(\Nil(R),\mathbb{Q}/\mathbb{Z}))
 \cong \Hom_{\mathbb{Z}}(\Tor^R_1(R/I,\Nil(R)),\mathbb{Q}/\mathbb{Z})\not=0$$
by Example \ref{flat-nots}. Hence,  $\Hom_{\mathbb{Z}}(\Nil(R),\mathbb{Q}/\mathbb{Z})$ is not nonnil-injective.
\end{example}

In view of the above examples, the class of all $\phi$-flat  modules is not resolving, and the class of all nonnil-injective is not coresolving in general.
To obtain the resolving or coresolving property similar to flatness and injectivity in $\NP$-rings, we introduce the following ``strong version'' of $\phi$-flat modules and  nonnil-injective modules using higher derived functors.

\begin{definition}\label{nonnil-regu}
Let $R$ be an $\NP$-ring and $M$ an $R$-module. Then
 \begin{enumerate}
    \item $M$ is called \emph{strongly $\phi$-flat} if $\Tor^R_n(T,M)=0$ for any $\phi$-torsion module $T$ and any $n\geq 1$.
        \item $M$ is called \emph{strongly nonnil-injective} if $\Ext_R^n(T,M)=0$ for any $\phi$-torsion module $T$ and any $n\geq 1$.
 \end{enumerate}
\end{definition}

\begin{lemma}\label{nonnil-regu}
Let $R$ be a $\phi$-ring and $M$ an $R$-module. Then
 \begin{enumerate}
    \item $M$ is strongly $\phi$-flat if and only if  $\Tor^R_n(R/I,M)=0$ for any $($finitely generated$)$ nonnil ideal $I$ of $R$ and any $n\geq 1$.
        \item $M$ is strongly nonnil-injective if and only if\ $\Ext_R^n(R/I,M)=0$ for any nonnil ideal $I$ of $R$ and any $n\geq 1$.
 \end{enumerate}
\end{lemma}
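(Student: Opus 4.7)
The forward implications in both parts are immediate: since $R/I$ is $\phi$-torsion for every nonnil ideal $I$, the defining vanishing specialises to the stated one. The content lies in the reverse implications, which I would handle separately: (1) by direct-limit techniques and (2) by dimension shifting.

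For part~(1), the plan is a three-stage bootstrap. First I would upgrade the parenthetical hypothesis from finitely generated to arbitrary nonnil ideals: if $J$ is nonnil, pick $a\in J\setminus\Nil(R)$; then the finitely generated sub-ideals of $J$ containing $a$ form a cofinal system of nonnil ideals, so $R/J=\varinjlim R/J_\alpha$ with each $J_\alpha$ finitely generated nonnil, and commutation of $\Tor$ with direct limits yields $\Tor^R_n(R/J,M)=0$ for every nonnil $J$ and every $n\geq 1$. Next, any cyclic $\phi$-torsion module $Rx$ is isomorphic to $R/\ann_R(x)$, whose annihilator contains a nonnil ideal and is therefore itself nonnil, so the vanishing applies. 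For a finitely generated $\phi$-torsion $T$, I would induct on the number of generators via $0\to Rx\to T\to T/Rx\to 0$ and the $\Tor$ long exact sequence; both flanks are $\phi$-torsion by closure of the torsion class under submodules and quotients. Finally, every $\phi$-torsion $T$ is the directed union of its finitely generated $\phi$-torsion submodules, so one last application of $\Tor$--direct-limit commutation concludes.

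For part~(2), the direct-limit trick is unavailable because $\Ext$ does not commute with direct limits in the first variable; instead I would induct on $n\geq 1$. The base case $n=1$ is precisely the known equivalence between $M$ being nonnil-injective and $\Ext_R^1(R/I,M)=0$ for all nonnil $I$, already recalled in the excerpt. For the inductive step, given a $\phi$-torsion $T$ with generating set $\{x_i\}_{i\in\Lambda}$, form
\[
0\longrightarrow K\longrightarrow P_0\longrightarrow T\longrightarrow 0,\qquad P_0:=\bigoplus_{i\in\Lambda}R/\ann_R(x_i);
\]
each $\ann_R(x_i)$ is nonnil, so $\Ext_R^n(P_0,M)\cong\prod_{i}\Ext_R^n(R/\ann_R(x_i),M)=0$ for every $n\geq 1$ by hypothesis. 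The kernel $K$, being a submodule of the $\phi$-torsion module $P_0$, is itself $\phi$-torsion, and the long exact sequence
\[
\Ext_R^{n-1}(K,M)\longrightarrow \Ext_R^{n}(T,M)\longrightarrow \Ext_R^{n}(P_0,M)=0
\]
forces $\Ext_R^n(T,M)=0$ from $\Ext_R^{n-1}(K,M)=0$, which is the inductive hypothesis applied to the $\phi$-torsion module $K$.

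The only genuinely delicate point is the cofinality observation in part~(1) that turns the parenthetical ``finitely generated'' hypothesis into the universal one; without it the direct-limit argument cannot run. Everything else is formal and follows from the long exact sequences together with the closure properties of the $\phi$-torsion class under submodules, quotients and direct sums.
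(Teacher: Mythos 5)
Your proof is correct, but it takes a genuinely different route from the paper's. The paper disposes of both parts in two lines by dimension shifting along a projective (resp.\ injective) resolution: $M$ is strongly $\phi$-flat iff every syzygy $\Omega^{n}(M)$ is $\phi$-flat, and $\Tor_1^R(R/I,\Omega^{n}(M))\cong\Tor_{n+1}^R(R/I,M)$, so everything reduces to the already-quoted degree-one criteria ``$\phi$-flat iff $\Tor_1^R(R/I,-)=0$ for all (finitely generated) nonnil $I$'' and its $\Ext^1$ analogue; the injective case is dual via co-syzygies. You instead re-derive the passage from ideals to arbitrary $\phi$-torsion modules separately in each homological degree: for $\Tor$ by the cofinality argument (finitely generated nonnil subideals containing a fixed non-nilpotent element), then cyclic modules $R/\ann_R(x)$, then finitely generated $\phi$-torsion modules by induction on generators, then direct limits; for $\Ext$ by induction on $n$ using a presentation $0\to K\to\bigoplus_i R/\ann_R(x_i)\to T\to 0$ with $\phi$-torsion kernel. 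Both arguments are sound --- your closure properties (the $\phi$-torsion class is a hereditary torsion class of finite type, so closed under submodules, quotients, direct sums and direct limits) are exactly what is needed, and your base case for part (2) is the recalled Baer-type criterion. What the paper's route buys is brevity and uniformity between the two parts at the cost of leaning on the cited degree-one characterizations; what yours buys is self-containedness (part (1) does not invoke the $\phi$-flat criterion at all) and an explicit record of where the ``finitely generated'' parenthetical is actually used, namely in the cofinality step.
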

\begin{proof} One can easily verify that  an $R$-module $M$ is strongly $\phi$-flat (resp.,  strongly nonnil-injective) if and only if each syzygies $\Omega^{n}(M)$ (resp., co-syzygies $\Omega^{-n}(M)$) of $M$ is $\phi$-flat  (resp., nonnil-injective) and that  each $\Omega^{n}(M)$ (resp., $\Omega^{-n}(M)$) is  $\phi$-flat (resp., nonnil-injective) if and only if $\Tor^R_1(R/I,\Omega^{n}(M))=0$ for any nonnil ideal $I$ of $R$. (resp., $\Ext_R^1(R/I,\Omega^{-n}(M))=0$ for any $($finitely generated$)$  nonnil ideal $I$ of $R$.)
\end{proof}

 \begin{proposition}\label{bas-sfsi}
Let $R$ be a $\phi$-ring and  $0\rightarrow A\rightarrow B\rightarrow C\rightarrow 0$ a short exact sequence of $R$-modules.  Then the following statements hold.
 \begin{enumerate}
    \item The class of strongly $\phi$-flat modules $($resp.,  strongly nonnil-injective modules$)$ is closed under direct limits $($resp., direct products$),$ direct summands, and extensions.
    \item If $B$ and $C$ are  strongly $\phi$-flat modules, so is $A$.
    \item If  $A$ and $B$ are  strongly nonnil-injective modules, so is $C$.
 \end{enumerate}
\end{proposition}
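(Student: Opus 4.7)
The plan is to reduce everything to Lemma \ref{nonnil-regu}, so that ``strongly $\phi$-flat'' becomes the vanishing of $\Tor^R_n(R/I,-)$ for all $n\geq 1$ and all nonnil ideals $I$, and ``strongly nonnil-injective'' becomes the vanishing of $\Ext_R^n(R/I,-)$ for all $n\geq 1$ and all nonnil ideals $I$. Once this translation is made, each item is a standard long exact sequence argument.

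For part (1), I would handle the flat side and the injective side in parallel. For direct limits of strongly $\phi$-flat modules $\{M_\lambda\}$, I use the fact that $\Tor^R_n(R/I,-)$ commutes with direct limits, so $\Tor^R_n(R/I,\varinjlim M_\lambda)\cong\varinjlim\Tor^R_n(R/I,M_\lambda)=0$. For direct products of strongly nonnil-injectives $\{M_\lambda\}$, I use that $\Ext_R^n(R/I,-)$ commutes with products in the second variable, giving $\Ext_R^n(R/I,\prod M_\lambda)\cong\prod \Ext_R^n(R/I,M_\lambda)=0$. Closure under direct summands is immediate because $\Tor$ and $\Ext$ split across summands. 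For closure under extensions, I take a short exact sequence $0\to A\to B\to C\to 0$ with $A$ and $C$ in the class and apply the long exact sequences of $\Tor^R_*(R/I,-)$ and $\Ext_R^*(R/I,-)$; the outer terms vanish, so the middle vanishes.

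For part (2), I apply the long exact $\Tor$ sequence associated to $0\to A\to B\to C\to 0$: for each nonnil ideal $I$ and each $n\geq 1$, the fragment
\begin{equation*}
\Tor^R_{n+1}(R/I,C)\longrightarrow \Tor^R_n(R/I,A)\longrightarrow \Tor^R_n(R/I,B)
\end{equation*}
has outer terms zero by the hypothesis on $B$ and $C$, forcing $\Tor^R_n(R/I,A)=0$. By Lemma \ref{nonnil-regu}, $A$ is strongly $\phi$-flat. Part (3) is entirely dual: the long exact $\Ext$ sequence yields the fragment
\begin{equation*}
\Ext_R^n(R/I,B)\longrightarrow \Ext_R^n(R/I,C)\longrightarrow \Ext_R^{n+1}(R/I,A),
\end{equation*}
whose outer terms vanish, so $\Ext_R^n(R/I,C)=0$ and $C$ is strongly nonnil-injective.

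There is essentially no real obstacle here; the only mild subtlety is remembering that for the extension closure in (1) one needs the hypothesis on \emph{both} endpoints (which is why we do not need a shift), whereas in (2) and (3) we exploit the shift in the long exact sequence to bootstrap across the missing endpoint. I would present the flat and injective cases side by side to make the parallel visible, and invoke Lemma \ref{nonnil-regu} once at the start and once at the end of each argument rather than repeatedly.
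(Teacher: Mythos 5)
Your proof is correct and follows essentially the same route as the paper: both arguments are just the long exact sequences of $\Tor$ and $\Ext$, the only cosmetic difference being that you test against $R/I$ for nonnil ideals $I$ via Lemma \ref{nonnil-regu} while the paper tests directly against an arbitrary $\phi$-torsion module $T$ (which avoids invoking that lemma). All the degree bookkeeping in your fragments is right, so there is nothing to fix.
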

\begin{proof} We only prove $(2)$, since the proof of  $(1)$ is easy and the proof of  $(3)$ is similar to that of $(2)$. Let $T$ be a $\phi$-torsion module. Then we have an exact sequence  $\cdots\rightarrow \Tor_{n+1}^R(T,C)\rightarrow \Tor_n^R(T,A)\rightarrow \Tor_n^R(T,B)\rightarrow \cdots \rightarrow \Tor_2^R(T,C)\rightarrow \Tor_1^R(T,A)\rightarrow \Tor_1^R(T,B)\rightarrow \Tor_1^R(T,C).$ Since  $B$ and $C$ are  strongly $\phi$-flat modules, $\Tor^R_n(T,B)=\Tor^R_n(T,C)=0$ for any $n\geq 1$. Hence $\Tor^R_n(T,A)=0$ for any $n\geq 1$, whence $A$ is  strongly $\phi$-flat.
\end{proof}

Obviously, every strongly $\phi$-flat module is $\phi$-flat, and every strongly nonnil-injective module is nonnil-injective. However, it follows by Lemma \ref{bas-sfsi} that Example \ref{flat-nots} and  Example \ref{inj-nots} show that $\phi$-flat modules are not always strongly $\phi$-flat, and nonnil-injective modules are also not always strongly nonnil-injective.  But the following result exhibits that over \ZN\ rings, $\phi$-flat modules are exactly strongly $\phi$-flat and nonnil-injective modules are exactly  strongly nonnil-injective.
\begin{theorem}\label{sp-ss}
Let $R$ be  a \ZN\ ring.  Then the following  assertions hold:
\begin{enumerate}
    \item  an $R$-module $M$ is  $\phi$-flat if and only if it is strongly $\phi$-flat;
    \item  an $R$-module $M$ is  nonnil-injective if and only if it is strongly nonnil-injective.
\end{enumerate}
\end{theorem}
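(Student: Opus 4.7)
The ``if'' directions of both (1) and (2) are immediate, since $\phi$-flatness (resp., nonnil-injectivity) is simply the $n=1$ instance of strong $\phi$-flatness (resp., strong nonnil-injectivity). The content is in the converses, and I propose a single argument that handles both by duality.

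First, I would exploit the $\ZN$-hypothesis to record that a nonnil ideal of $R$ is exactly an ideal containing a regular element: since $\Z(R)=\Nil(R)$, the set $R\setminus\Nil(R)$ coincides with the set of non-zero-divisors, and every nonnil ideal must meet this set. Consequently, an $R$-module is $\phi$-torsion iff every element is annihilated by a regular element of $R$. For any regular $s\in R$, the sequence $0\to R\xrightarrow{\cdot s}R\to R/sR\to 0$ is a free resolution of $R/sR$, so $R/sR$ has projective dimension at most $1$, and the vanishing of $\Tor^R_n(R/sR,M)$ and $\Ext^n_R(R/sR,M)$ for every $n\geq 2$ is automatic; the vanishing at $n=1$ is supplied precisely by the hypothesis that $M$ is $\phi$-flat (resp.,  nonnil-injective), since $R/sR$ is itself $\phi$-torsion.

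Now, for an arbitrary $\phi$-torsion module $T$, the plan is to build a surjection $P\twoheadrightarrow T$ with $P=\bigoplus_{x\in T}R/s_xR$, where each $s_x$ is a regular annihilator of $x$. The module $P$ inherits the Tor/Ext vanishing from its summands, so $\Tor^R_n(P,M)=0$ and $\Ext^n_R(P,M)=\prod_x\Ext^n_R(R/s_xR,M)=0$ for every $n\geq 1$. Since the class of $\phi$-torsion modules is a hereditary torsion class (noted in Section~1), the kernel $L$ of $P\twoheadrightarrow T$ is again $\phi$-torsion. Feeding $0\to L\to P\to T\to 0$ into the long exact sequences of $-\otimes_R M$ and $\Hom_R(-,M)$, the middle term kills itself in every positive degree and yields the dimension shifts
\[\Tor^R_n(T,M)\cong\Tor^R_{n-1}(L,M),\qquad \Ext^n_R(T,M)\cong\Ext^{n-1}_R(L,M)\qquad (n\geq 2).\]
An induction on $n$, with base case $n=1$ provided by the hypothesis and inductive step applied to the $\phi$-torsion module $L$, then forces the vanishing for every $n\geq 1$; by the preceding lemma this is exactly the desired strong $\phi$-flatness (resp., strong nonnil-injectivity).

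The only subtlety I anticipate is ensuring that the cover $P$ itself sits in the $\phi$-torsion class, so that its kernel $L$ does too. This is precisely where the ZN hypothesis bites: it makes each $s_x$ a genuine annihilator of a cyclic summand of $P$ and forces $R/s_xR$ to have projective dimension at most $1$, two features that can both fail over a general $\NP$-ring, as Examples \ref{flat-nots} and \ref{inj-nots} already show.
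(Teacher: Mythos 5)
Your proof is correct, but it takes a genuinely different route from the paper's. You reduce to the cyclic modules $R/sR$ with $s$ regular (which, over a \ZN\ ring, is exactly the class of cyclic quotients by principal nonnil ideals generated by regular elements), observe that these have projective dimension at most one so that only the degree-one vanishing needs the hypothesis, and then propagate the vanishing to an arbitrary $\phi$-torsion module $T$ by covering it with $P=\bigoplus_{x\in T}R/s_xR$ and running a dimension-shifting induction through the $\phi$-torsion kernel $L$. The paper instead fixes a regular element $a$ in a nonnil ideal and invokes the change-of-rings isomorphisms of Cartan--Eilenberg: from $\Tor^R_1(R/J,M)=0$ it deduces that $M/aM$ is a flat $R/\langle a\rangle$-module (resp., that $\Hom_R(R/\langle a\rangle,M)$ is an injective $R/\langle a\rangle$-module), and then reads off the vanishing of all higher $\Tor^R_n(R/J,M)$ (resp., $\Ext^n_R(R/J,M)$) from flatness (resp., injectivity) over the quotient ring, finally passing from cyclic test modules to all $\phi$-torsion ones via Lemma \ref{nonnil-regu}. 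Your argument is more elementary and self-contained: it uses only long exact sequences, the hereditariness of the $\phi$-torsion class, and the standard behaviour of $\Tor$ and $\Ext$ on direct sums, and it establishes the vanishing for every $\phi$-torsion $T$ directly. The paper's argument, by contrast, yields the additional structural information that $M/aM$ is flat (resp., $\Hom_R(R/\langle a\rangle,M)$ is injective) over $R/\langle a\rangle$. One cosmetic remark: at the very end you do not actually need to cite Lemma \ref{nonnil-regu}, since you have verified the defining condition of strong $\phi$-flatness (resp., strong nonnil-injectivity) on all $\phi$-torsion modules rather than only on the cyclic ones.
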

\begin{proof} (1) Suppose $M$ is a $\phi$-flat $R$-module. Let $J$ be an ideal  of $R$ that contains $a$. Since $a$ is a non-zero-divisor of $R$, $\Tor_n^R(R/\langle a\rangle,M)=0$ for any positive integer $n$.  It follows by \cite[Proposition 4.1.1]{CE56} that  $$\Tor_1^{R/\langle a\rangle}(R/J,M/aM)\cong \Tor_1^{R/\langle a\rangle}(R/J,M\otimes_RR/\langle a\rangle)\cong \Tor_1^{R}(R/J,M)=0.$$ Hence $M/Ma$ is a flat $R/\langle a\rangle$-module. Consequently, for any $n\geq 1$ we have $$\Tor_n^R(R/J,M)\cong \Tor_n^{R/\langle a\rangle}(R/J,M\otimes_RR/\langle a\rangle)\cong \Tor_n^{R/\langle a\rangle}(R/J,M/aM)=0.$$ It follows that $M$ is a strongly $\phi$-flat $R$-module.

(2) Now suppose $M$ is a nonnil-injective $R$-module. Let $J$ be an ideal  of $R$ that contains $a$.  Since $a$ is a non-zero-divisor of $R$, $\Ext^n_R(R/\langle a\rangle,M)=0$ for any positive integer $n$.  It follows by \cite[Proposition 4.1.4]{CE56}  that  $$\Ext^1_{R/\langle a\rangle}(R/J,\Hom_R(R/\langle a\rangle,M))\cong \Ext^1_{R}(R/J,M)=0.$$ Hence $\Hom_R(R/\langle a\rangle,M)$ is an injective $R/\langle a\rangle$-module by Baer criterion. Consequently, for any $n\geq 1$ we have $$\Ext^n_R(R/J,M)\cong \Ext^n_{R/\langle a\rangle}(R/J,\Hom_R(R/\langle a\rangle,M))=0.$$ It follows that $M$ is a strongly nonnil-injective $R$-module.
\end{proof}

\begin{remark}  Recall from \cite{QZ22-regular-Noe,XWL22} that an $R$-module $M$ is called to be regular flat (resp., regular injective) if  $\Tor^R_1(R/I,M)=0$ (resp., $\Ext_R^1(R/I,M)=0$) for any regular ideal (i.e., an ideal that contains a non-zero-divisor) $I$ of $R$. Similar with the proof of Theorem \ref{sp-ss}, one can show that an  $R$-module $M$ is  regular flat (resp., regular injective) if and only if  $\Tor^R_n(R/I,M)=0$ (resp., $\Ext_R^n(R/I,M)=0$) for any regular ideal $I$ of $R$ and any $n\geq 1$.
\end{remark}

It is known that \ZN\ $\phi$-ring is exactly a strongly $\phi$-ring. The following result is devoted to  the converse of Theorem \ref{sp-ss} under some assumptions.

\begin{theorem}\label{op-fn}
Let $R$ be  a $\phi$-ring with $(0:_Ra)$ finitely generated for any non-nilpotent element $a$ $($e.g. $R$ is a nonnil-coherent ring$)$ or $\Nil(R)$ nilpotent.  If one of the following two statements holds
\begin{enumerate}
    \item every  $\phi$-flat  $R$-module is  strongly $\phi$-flat;
\item every nonnil-injective  $R$-module is  strongly nonnil-injective,
\end{enumerate}
then $R$ is a strongly $\phi$-ring.
\end{theorem}
\begin{proof}
(1) Let $R$ be  a $\phi$-ring and  $a$ a non-nilpotent element in $R$. Suppose  every  $\phi$-flat  $R$-module is  strongly $\phi$-flat. It follows by the proof of \cite[Proposition 1]{ZZW21} that $R/\Nil(R)$ is a $\phi$-flat  $R$-module, and so is strongly $\phi$-flat. Hence,  $$\Tor_2^R(R/Ra,R/\Nil(R))\cong\Tor_1^R(R/(0:_Ra),R/\Nil(R))\cong \frac{(0:_Ra)\cap\Nil(R)}{(0:_Ra)\Nil(R)}=0.$$ Since $R$ is  a $\phi$-ring,    $(0:_Ra)\subseteq \Nil(R)$, and so $(0:_Ra)\cap\Nil(R)=(0:_Ra)$. So  $\Tor_2^R(R/Ra,R/\Nil(R))\cong \frac{(0:_Ra)}{(0:_Ra)\Nil(R)}=0$. And hence $(0:_Ra)=(0:_Ra)\Nil(R)$.

(a) Suppose $(0:_Ra)$ is finitely generated. By Nakayama's lemma, we have $(0:_Ra)=0$, that is, $a$ is a nonzero-divisor. So $R$ is a strongly $\phi$-ring.

(b)  Suppose $\Nil(R)$ is nilpotent.  Assume $\Nil(R)^m=0$. Then $(0:_Ra)=(0:_Ra)\Nil(R)=\cdots=(0:_Ra)\Nil(R)^m=0$. So $R$ is a strongly $\phi$-ring.

(2)  Let $R$ be  a $\phi$-ring and  $a$ a non-nilpotent element in $R$. Suppose every nonnil-injective  $R$-module is  strongly nonnil-injective. It follows by the proof of  \cite[Theorem 1.6]{ZQ22-p-dp} that $(R/\Nil(R))^+:=\Hom_{\mathbb{Z}}(R/\Nil(R),\mathbb{Q}/\mathbb{Z})$ is a nonnil-injective  $R$-module, and so is strongly nonnil-injective, where $E$ is an injective cogenerator of $R$-modules.  Hence,  $$\Ext^2_R(R/Ra,(R/\Nil(R))^+)\cong\Tor_2^R(R/Ra,R/\Nil(R))^+=0.$$
So $\Tor_2^R(R/Ra,R/\Nil(R))=0$, and hence $(0:_Ra)=(0:_Ra)\Nil(R)$. The rest is the same  with that of $(1)$.
\end{proof}




\begin{proposition}\label{flat-FP-injective}
Let $R$ be an $\NP$-ring, then the following assertions are equivalent:
\begin{enumerate}
    \item $M$ is strongly $\phi$-flat;
      \item $\Hom_R(M,E)$ is strongly nonnil-injective for any injective module $E$;
    \item  if $E$ is an injective cogenerator, then $\Hom_R(M,E)$ is strongly nonnil-injective.
\end{enumerate}
\end{proposition}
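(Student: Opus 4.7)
The plan is to establish the equivalence by relying on the standard adjunction isomorphism
\[
\Ext^n_R(T,\Hom_R(M,E))\cong \Hom_R(\Tor^R_n(T,M),E)
\]
valid whenever $E$ is an injective $R$-module (this is the hom-tensor-Ext-Tor duality, cf.\ \cite{CE56}). Once this is in hand, the equivalence will follow very cleanly from the characterization of strongly $\phi$-flat and strongly nonnil-injective modules in terms of vanishing of $\Tor^R_n$ and $\Ext^n_R$ against $\phi$-torsion modules.

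First, for $(1)\Rightarrow(2)$, I would fix any injective $R$-module $E$ and any $\phi$-torsion module $T$. By assumption $\Tor^R_n(T,M)=0$ for all $n\geq 1$, so the adjunction formula immediately gives $\Ext^n_R(T,\Hom_R(M,E))=0$ for all $n\geq 1$, which is exactly strong nonnil-injectivity of $\Hom_R(M,E)$. Next, $(2)\Rightarrow(3)$ is trivial since an injective cogenerator is, in particular, an injective module.

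For $(3)\Rightarrow(1)$, I would fix an injective cogenerator $E$ such that $\Hom_R(M,E)$ is strongly nonnil-injective. For any $\phi$-torsion module $T$ and $n\geq 1$, the adjunction yields
\[
\Hom_R(\Tor^R_n(T,M),E)\cong \Ext^n_R(T,\Hom_R(M,E))=0.
\]
Since $E$ is a cogenerator, a module $N$ with $\Hom_R(N,E)=0$ must vanish; therefore $\Tor^R_n(T,M)=0$ for all $n\geq 1$, which is the definition of strongly $\phi$-flat.

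The only potential obstacle is confirming that the adjunction isomorphism holds in the required generality; this is a well-known fact whenever $E$ is injective (it comes from applying the exact functor $\Hom_R(-,E)$ to a projective resolution of $T$, tensoring with $M$, and invoking the balance of Ext and Tor), so this is a citation rather than a real obstacle.
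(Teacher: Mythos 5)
Your proposal is correct and follows essentially the same route as the paper: both directions hinge on the duality isomorphism $\Ext^n_R(T,\Hom_R(M,E))\cong \Hom_R(\Tor^R_n(T,M),E)$ for injective $E$, together with the cogenerator property for $(3)\Rightarrow(1)$. The only cosmetic difference is that the paper tests against $R/I$ for nonnil ideals $I$ and invokes its Lemma 1.4, while you test against arbitrary $\phi$-torsion modules directly; both are valid.
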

\begin{proof}
$(1)\Rightarrow (2)$: Let $T$ be a  $\phi$-torsion $R$-module and $E$ an injective $R$-module. Since $M$ is strongly $\phi$-flat, $$\Ext_R^n(T,\Hom_R(M,E))\cong\Hom_R(\Tor_n^R(T,M),E)=0$$ for any positive integer $n$.  Thus $\Hom_R(M,E)$ is strongly  nonnil-injective.

$(2)\Rightarrow (3)$: Trivial.

$(3)\Rightarrow (1)$:  Let $I$ be a  nonnil ideal of $R$ and $E$ an injective cogenerator. Since $\Hom_R(M,E)$ is strongly nonnil-injective,  $$\Hom_R(\Tor_n^R(R/I,M),E)\cong \Ext_R^n(R/I,\Hom_R(M,E))=0$$ for any positive integer $n$. Since $E$ is an injective cogenerator, $\Tor_n^R(R/I,M)=0$ for any positive integer $n$. Thus $M$ is strongly $\phi$-flat by Lemma \ref{nonnil-regu}.
\end{proof}

Let $R$ be an $\NP$-ring. Then every flat $R$-module is strongly $\phi$-flat, and every injective $R$-module is strongly nonnil-injective.  The converses are trivially true for integral domains, but not in general.

\begin{example}  It is obvious that all flat (resp., injective) modules are  strongly $\phi$-flat (resp., strongly nonnil-injective). However, the converse does not hold in general. Indeed, let $R$ be a strongly $\phi$-ring which is not an integral domain (e.g. $R=D(+)Q$ with $D$ a domain and $Q$ its quotient field). Then every  strongly $\phi$-flat (resp., strongly nonnil-injective) module is $\phi$-flat (resp., nonnil-injective) by Theorem \ref{sp-ss}. However, there exist $\phi$-flat (resp., nonnil-injective) modules which are not flat (resp., injective), (see \cite[Proposition 1]{ZZW21} and \cite[Theorem 1.6]{ZQ22-p-dp}).
\end{example}

It was proved in \cite[Proposition 1]{ZZW21} and \cite[Theorem 1.6]{ZQ22-p-dp}  that a $\phi$-ring $R$ is an integral domain if and only if every $\phi$-flat  $R$-module is flat, if and only if  every nonnil-injective  $R$-module is injective. The following example shows that all strongly $\phi$-flat (resp., strongly nonnil-injective $\phi$-torision-free) modules can be flat (resp., injective) over $\phi$-rings which are not domains.

\begin{example}\label{flat-nots-5}
Let $R=\mathbb{Z}(+)\mathbb{Z}(\p^{\infty})$ be the ring in Example \ref{flat-nots}. Then the following statements hold.
\begin{enumerate}
    \item Every strongly $\phi$-flat $R$-module is flat.
     \item Every strongly nonnil-injective $\phi$-torision-free $R$-module is injective.
\end{enumerate}
\end{example}
\begin{proof}
Let $I$ be an ideal of $R$. Then, by \cite[Corollary 3.4]{DW09}, $I$ is of the following two form:
 \begin{enumerate}[(a)]
    \item $I:=\langle (n,0)\rangle=\langle n\rangle (+)\mathbb{Z}(\p^{\infty})$ where $0\not=n\in \mathbb{Z}$;
        \item $I:=0(+)N$, where $N$ is a subgroup of $\mathbb{Z}(\p^{\infty})$.
 \end{enumerate}
The ideal $I$ in case (a) is a nonnil ideal of $R$. Now we consider the ideal in case (b). Then $N$ is of the form $\mathbb{Z}(\p^k):=\{\frac{n}{\p^k}+\mathbb{Z}\in  \mathbb{Z}(\p^{\infty})\mid n\ \mbox{is an integer}\}$ with $k$ a non-negative integer or $\mathbb{Z}(\p^{\infty})$. Set $I_k:=\langle (0,\frac{1}{\p^k})\rangle=0(+)\mathbb{Z}(\p^k)$ for each positive integer $k$.
Note that there is a short exact sequence
$0\rightarrow I_k\rightarrow R\rightarrow(\p^k,0)R\rightarrow 0$
for each non-negative integer $k$. Note that $\mathbb{Z}(\p^{\infty})=\bigcup\mathbb{Z}(\p^k)=\lim\limits_{\longrightarrow}\mathbb{Z}(\p^k)$. Set $I_{\infty}=0(+)\mathbb{Z}(\p^\infty)$, then $I_{\infty}=\lim\limits_{\longrightarrow}I_k$.

(1) Suppose $M$ is a strongly $\phi$-flat $R$-module.
It follows that
$$\Tor^R_1(R/I_k,M)\cong \Tor^R_1(\langle(\p^k,0)\rangle,M)\cong \Tor^R_2(R/\langle(\p^k,0)\rangle,M)=0$$
for each positive integer $k$.
And so each natural homomorphism $f_k:I_k\otimes_RM\rightarrow R\otimes_RM$ is a monomorphism.
Now consider the case $I_{\infty}$. Then the natural map $f_\infty:I_{\infty}\otimes_RM\rightarrow R\otimes_RM$, which can be seen as the direct limits of $f_k$, is also a monomorphism. So $\Tor^R_1(R/I_{\infty},M)=0$. In conclusion,  $\Tor^R_1(R/I,M)=0$ for any ideal $I$ of $R$. It follows that  $M$ is a  flat $R$-module.

(2) Suppose $M$ is a strongly nonnil-injective $\phi$-torision-free  $R$-module. Then $$\Ext_R^1(R/I_k,M)\cong \Ext_R^1((\p^k,0)R,M)\cong \Ext_R^2(R/(\p^k,0)R,M)=0$$
for each non-negative integer $k$. Now, consider the  the case $I_{\infty}$.
 Let $$0\rightarrow \Hom_R(R/I_k,M)\rightarrow \Hom_R(R,M)\rightarrow \Hom_R(I_k,M)\rightarrow 0$$ be the natural exact sequence. Taking inverse limits, we have the following exact sequence: $$0\rightarrow \lim\limits_{\longleftarrow}\Hom_R(R/I_k,M)\rightarrow \lim\limits_{\longleftarrow}\Hom_R(R,M)\rightarrow \lim\limits_{\longleftarrow}\Hom_R(I_k,M)\rightarrow {\textstyle\lim\limits_{\longleftarrow}}^1\Hom_R(R/I_k,M) \rightarrow 0$$
  by \cite[1.2.2]{SS18}.
Considering the $R$-exact sequence $0\rightarrow I_{k+1}/I_k\rightarrow R/I_{k}\rightarrow R/I_{k+1}\rightarrow 0,$  we have an exact sequence $$0\rightarrow\Hom_R(R/I_{k+1},M)\rightarrow \Hom_R(R/I_{k},M)\rightarrow \Hom_R(I_{k+1}/I_k,M)\rightarrow0.$$  Since $(0:_RI_{k+1}/I_k)=(0:_RI_1)=\langle (p,0)\rangle$, we have $$\Hom_R(I_{k+1}/I_k,M)\cong \Hom_R(R/\langle (p,0)\rangle,M)=0$$ as $M$ is $\phi$-torsion-free. So we have a natural isomorphism $\Hom_R(R/I_{k+1},M)\cong \Hom_R(R/I_{k},M)$ for each non-negative integer $k$, and hence the inverse system $\{\Hom_R(R/I_{k},M)\mid k\geq 0\}$ is Mittag-Leffler.  It follows by \cite[1.2.3]{SS18} that $${\textstyle\lim\limits_{\longleftarrow}}^1\Hom_R(R/I_k,M)=0.$$ Consequently, the natural map $$\lim\limits_{\longleftarrow}\Hom_R(R,M)\cong\Hom_R(R,M)\twoheadrightarrow \lim\limits_{\longleftarrow}\Hom_R(I_k,M)\cong\Hom_R(I_\infty,M)$$ is an epimorphism and so $\Ext_R^1(R/I_\infty,M)=0$. In conclusion, $\Ext_R^1(R/I,M)=0$ for any ideal $I$ of $R$. It follows that $M$ is an injective $R$-module.
\end{proof}

\section{on $\phi$-flat dimensions of modules and $\phi$-weak global dimensions of rings}

Let $R$ be a ring. It is well known that the flat dimension of an $R$-module $M$ is defined as the shortest flat resolution of $M$ and the weak global dimension of $R$ is the supremum of the flat dimensions  of all $R$-modules.  We now introduce the notion of $\phi$-flat dimension of an $R$-module as follows.

\begin{definition}\label{w-phi-flat }
Let $R$ be a ring and $M$ an $R$-module. We write $\phi$-\fd$_R(M)\leq n$  ($\phi$-\fd\ abbreviates  \emph{$\phi$-flat dimension}) if there is an exact sequence of $R$-modules
$$ 0 \rightarrow F_n \rightarrow ...\rightarrow F_1\rightarrow F_0\rightarrow M\rightarrow 0   \ \ \ \ \ \ \ \ \ \ \ \ \ \ \ \ \ \ \ \ \ \ \ \ \ \ \ \ \ \ \ \ \ \ \ \ \ \ \ (\diamondsuit)$$
where each $F_i$ is  strongly $\phi$-flat for $i=0,...,n$. The exact sequence $(\diamondsuit)$ is said to be  a $\phi$-flat resolution of length $n$ of $M$. If such finite resolution does not exist, then we say $\phi$-\fd$_R(M)=\infty$; otherwise,  define $\phi$-\fd$_R(M) = n$ if $n$ is the length of the shortest $\phi$-flat resolution of $M$.
\end{definition}\label{def-wML}
It is obvious that an $R$-module $M$ is strongly $\phi$-flat if and only if $\phi$-\fd$_R(M)=0$. Certainly, $\phi$-\fd$_R(M)\leq$\fd$_R(M)$. If $R$ is an integral domain, then $\phi$-\fd$_R(M)=$\fd$_R(M).$

\begin{proposition}\label{w-phi-flat d}
Let $R$ be an $\NP$-ring. The following statements are equivalent for an $R$-module $M$:
\begin{enumerate}
    \item $\phi$-\fd$_R(M)\leq n$;
    \item $\Tor^R_{n+k}(T, M)=0$ for all $\phi$-torsion $R$-modules $T$ and all positive integer $k$;
        \item $\Tor^R_{n+k}(R/I,M)=0$ for all nonnil ideals $I$ and all positive integer $k$;
    \item $\Tor^R_{n+k}(R/I,M)=0$ for all finitely generated nonnil ideals $I$ and all positive integer $k$;
    \item if $0 \rightarrow F_n \rightarrow ...\rightarrow F_1\rightarrow F_0\rightarrow M\rightarrow 0$ is an exact sequence, where $F_0, F_1, . . . , F_{n-1}$ are strongly $\phi$-flat $R$-modules, then $F_n$ is strongly $\phi$-flat;
    \item if $0 \rightarrow F_n \rightarrow ...\rightarrow F_1\rightarrow F_0\rightarrow M\rightarrow 0$ is an exact sequence, where $F_0, F_1, . . . , F_{n-1}$ are flat $R$-modules, then $F_n$ is strongly $\phi$-flat;
    \item there exists an exact sequence  $0 \rightarrow F_n \rightarrow ...\rightarrow F_1\rightarrow F_0\rightarrow M\rightarrow 0$ ,
where $F_0, F_1, . . . , F_{n-1}$ are flat $R$-modules, then $F_n$ is strongly  $\phi$-flat.
\end{enumerate}
\end{proposition}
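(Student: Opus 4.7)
The plan is to prove all seven conditions equivalent by establishing a convenient cycle of implications, with dimension shifting as the recurring technical tool. Specifically, whenever $0 \to K \to F \to N \to 0$ is short exact with $F$ strongly $\phi$-flat and $T$ is $\phi$-torsion, the long exact sequence of $\Tor$ collapses (because $\Tor^R_j(T,F)=0$ for every $j\geq 1$) to give $\Tor^R_{k+1}(T,N) \cong \Tor^R_k(T,K)$ for all $k\geq 1$. Iterating this along a partial strongly $\phi$-flat (or flat) resolution $0 \to K_n \to F_{n-1} \to \cdots \to F_0 \to M \to 0$ yields $\Tor^R_{n+k}(T,M) \cong \Tor^R_k(T,K_n)$ for every $k\geq 1$, and this one identity is the workhorse of the entire proof.

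For (1) $\Rightarrow$ (2), apply the identity to a strongly $\phi$-flat resolution of length $n$: the terminal $F_n$ is strongly $\phi$-flat, so $\Tor^R_k(T,F_n)=0$ and we are done. The step (2) $\Rightarrow$ (3) is trivial since $R/I$ is $\phi$-torsion for every nonnil $I$, and (3) $\Rightarrow$ (4) is obvious. For (4) $\Rightarrow$ (3), fix a non-nilpotent element $a\in I$ and write $I$ as the directed union of its finitely generated subideals that contain $a$; these are all nonnil, and $R/I = \varinjlim R/I_\alpha$, so commutation of $\Tor$ with direct limits gives the claim. For (3) $\Rightarrow$ (2), first reduce to finitely generated $\phi$-torsion $T$ by writing $T=\varinjlim T_\alpha$; then induct on the number of generators via short exact sequences $0 \to T' \to T \to R/J \to 0$ with $J$ nonnil, propagating vanishing of $\Tor^R_{n+k}$ through the long exact sequence.

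The remaining implications tie vanishing of higher $\Tor$ to resolution-theoretic conditions. For (2) $\Rightarrow$ (5), dimension shifting through the given partial strongly $\phi$-flat resolution identifies $\Tor^R_k(T,F_n)$ with $\Tor^R_{n+k}(T,M)$, which vanishes by hypothesis, so $F_n$ is strongly $\phi$-flat in view of Lemma \ref{nonnil-regu}. The implication (5) $\Rightarrow$ (6) is immediate because flat modules are strongly $\phi$-flat, (6) $\Rightarrow$ (7) follows by truncating any flat resolution of $M$ at stage $n-1$ and taking its $n$-th syzygy, and (7) $\Rightarrow$ (1) is just the definition of $\phi$-fd, again using that flat implies strongly $\phi$-flat.

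The only genuinely non-routine step I anticipate is (3) $\Rightarrow$ (2): it relies on the structural observation that every $\phi$-torsion module is the directed union of finitely generated $\phi$-torsion submodules each admitting a finite filtration with cyclic quotients $R/I_i$ whose annihilators are nonnil. This rests on the fact, recorded at the start of the section, that the $\phi$-torsion modules form a hereditary torsion theory of finite type; once this reduction is in hand, all other steps reduce to standard long-exact-sequence manipulations.
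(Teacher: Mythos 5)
Your proof is correct, and the engine is the same as the paper's (dimension shifting along a partial resolution whose terms are strongly $\phi$-flat), but you close the cycle of implications differently. The paper proves $(4)\Rightarrow(5)$ in one step: after shifting, $\Tor^R_k(R/I,F_n)\cong\Tor^R_{n+k}(R/I,M)=0$ for all finitely generated nonnil $I$, and then Lemma \ref{nonnil-regu} is invoked to conclude that $F_n$ is strongly $\phi$-flat; the reduction from finitely generated nonnil ideals to arbitrary $\phi$-torsion test modules is thus entirely delegated to that lemma (and the paper never needs $(3)\Rightarrow(2)$ or $(4)\Rightarrow(3)$ as separate implications). You instead prove $(4)\Rightarrow(3)$ by writing $R/I$ as a direct limit over the finitely generated subideals of $I$ containing a fixed non-nilpotent element, and $(3)\Rightarrow(2)$ by filtering a finitely generated $\phi$-torsion module with cyclic quotients $R/J$, $J$ nonnil, then passing to direct limits; afterwards $(2)\Rightarrow(5)$ needs only the definition of strong $\phi$-flatness. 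Your route is longer but more self-contained: it effectively re-proves the content of Lemma \ref{nonnil-regu} inside the proposition, which is arguably cleaner here since the proposition is stated for $\NP$-rings while the lemma is stated only for $\phi$-rings. Both arguments are sound; the remaining steps ($(1)\Rightarrow(2)$ by induction, $(5)\Rightarrow(6)$, $(6)\Rightarrow(7)$ by truncating a flat resolution, $(7)\Rightarrow(1)$) coincide with the paper's.
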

\begin{proof}
$(1) \Rightarrow(2)$: We prove $(2)$ by induction on $n$.  For the case $n = 0$, (2) trivially holds  as $M$ is strongly $\phi$-flat. If $n>0$, then
there is an exact sequence  $0 \rightarrow F_n \rightarrow ...\rightarrow F_1\rightarrow F_0\rightarrow M\rightarrow 0$,
where each $F_i$ is strongly  $\phi$-flat for $i=0,...,n$. Set $K_0 = \ker(F_0\rightarrow M)$. Then both
$0 \rightarrow  K_0 \rightarrow  F_0 \rightarrow  M \rightarrow  0 $ and $0 \rightarrow  F_n \rightarrow  F_{n-1} \rightarrow...\rightarrow  F_1 \rightarrow  K_0 \rightarrow  0$ are exact. So $\phi$-\fd$_R(K_0)\leq n-1$. By induction, $\Tor^R_{n-1+k}(T, K_0)=0$
for all $\phi$-torsion $R$-modules $T$ and all positive integer $k$. Thus, it follows from the exact sequence   $$0=\Tor^R_{n+k}(T,F_0 )\rightarrow\Tor^R_{n+k}(T,M )\rightarrow \Tor^R_{n-1+k}(T, K_0 )\rightarrow \Tor^R_{n-1+k}(T, F_0)=0$$ that
$\Tor^R_{n+k}(T, M)\cong \Tor^R_{n-1+k}(T, K_0 )=0$.

$(2) \Rightarrow(3)\Rightarrow(4)$ and $(5) \Rightarrow(6)$:  Trivial.

$(4) \Rightarrow(5)$: Let $K_0 = \ker(F_0 \rightarrow  M)$ and $K_i = \ker(F_i \rightarrow  F_{i-1})$, where
$i = 1, . . . , n-1$. Then $K_{n-1}\cong F_n$. Since all $F_0, F_1, . . . , F_{n-1}$ are strongly $\phi$-flat,
$\Tor^R_k (R/I, F_n)\cong\Tor^R_{1+k} (R/I, K_{n-2})\cong\cdots\cong \Tor^R_{n+k}(R/I, M)=0$  for all finitely generated nonnil ideal $I$ and any  positive integer $k$   by dimensional shift. Hence $F_n$
is strongly  $\phi$-flat by Lemma \ref{nonnil-regu}.

$(6) \Rightarrow (7)$: Since the class of flat modules is covering, we can construct an exact sequence $\cdots \rightarrow F_{n-1} \xrightarrow{d_{n-1}} F_{n-2}\rightarrow  ...\rightarrow F_1\rightarrow F_0\rightarrow M\rightarrow 0$, where $F_0, F_1, . . . , F_{n-1}$ are flat $R$-modules, then $F_n:=\Ker(d_{n-1})$ is strongly  $\phi$-flat by $(6)$.

$(7) \Rightarrow (1)$: Trivial.
\end{proof}

The proof the following two results are similar with the classical ones and so we omit its proof.

\begin{corollary}
Let $R$ be an $\NP$-ring and $0\rightarrow A\rightarrow B\rightarrow C\rightarrow 0$ be an exact sequence of $R$-modules. Then the following results hold.
\begin{enumerate}
    \item $\phi$-\fd$_R(C)\leq 1+\max\{\phi$-\fd$_R(A),\phi$-\fd$_R(B)\}$;
    \item if $\phi$-\fd$_R(B)<\phi$-\fd$_R(C)$, then $\phi$-\fd$_R(A)= \phi$-\fd$_R(C)-1\geq\phi$-\fd$_R(B)$.
\end{enumerate}
\end{corollary}

\begin{corollary}
Let $R$ be an $\NP$-ring and $\{M_i\mid i\in\Gamma\}$ be a direct system of  $R$-modules. Then
\begin{center}
 $\phi$-\fd$_R(\lim\limits_{\longrightarrow}M_i)=\sup\{\phi$-\fd$_R(M_i)\}.$
\end{center}
\end{corollary}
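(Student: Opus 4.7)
The plan is to reduce the statement to a $\Tor$-vanishing criterion via Proposition \ref{w-phi-flat d} and then exploit the fact that $\Tor^R_n(N,-)$ commutes with filtered colimits whenever $N$ is finitely presented. Set $M = \lim\limits_{\longrightarrow}M_i$ and $d = \sup\{\phi\text{-fd}_R(M_i)\}$. The nontrivial content is the inequality $\phi\text{-fd}_R(M)\leq d$, which is vacuous if $d=\infty$, so assume $d<\infty$.

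First, I invoke Proposition \ref{w-phi-flat d}(4), which reduces the bound $\phi\text{-fd}_R(M)\leq d$ to checking that $\Tor^R_{d+k}(R/I,M)=0$ for every finitely generated nonnil ideal $I$ and every integer $k\geq 1$. The reason to restrict to finitely generated ideals at this stage is that then $R/I$ is finitely presented, which is precisely the hypothesis one needs for the colimit-commutation step to follow.

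Second, the colimit-commutation argument. Tensor product commutes with arbitrary colimits, and filtered colimits of $R$-modules are exact, so they commute with the homology that defines $\Tor$; hence $\Tor^R_n(R/I,-)$ preserves filtered colimits. Applying this yields
\[
\Tor^R_{d+k}(R/I,M) \;\cong\; \lim\limits_{\longrightarrow}\, \Tor^R_{d+k}(R/I,M_i),
\]
and by Proposition \ref{w-phi-flat d}(3) applied to each $M_i$, every term on the right vanishes. Therefore $\phi\text{-fd}_R(M)\leq d$, which is the direction containing all of the real content.

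The opposite inequality $\phi\text{-fd}_R(M_i)\leq \phi\text{-fd}_R(M)$ is the step I view as the main obstacle, because it can easily fail for general direct systems (for instance, a system of nonzero torsion modules with zero transition maps has zero colimit). I would interpret the equality in the corollary in the spirit of its classical flat-dimension analogue as really asserting $\leq$, in which case the above argument finishes the proof; alternatively, one can recover the reverse inequality under a purity hypothesis on the canonical maps $M_i\to M$, since then $\Tor^R_n(R/I,M_i)\hookrightarrow \Tor^R_n(R/I,M)$ for every $n$, and the two bounds combine to the displayed equality.
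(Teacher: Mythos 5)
Your argument is the standard one that the paper is implicitly invoking (it omits the proof, stating only that it is ``similar with the classical ones''), and it is correct: reduce to $\Tor^R_{d+k}(R/I,-)$ via Proposition \ref{w-phi-flat d}(4) and use that $\Tor^R_n(N,-)$ commutes with direct limits. One small point: finite presentation of $R/I$ is not actually needed for that commutation, since one resolves the \emph{first} variable and tensor products commute with colimits in the second variable regardless; restricting to finitely generated $I$ is convenient for citing item (4) but not logically required for the colimit step. Your caveat about the reverse inequality is also well taken --- as literally stated the equality can fail (e.g., a direct system with zero transition maps has zero colimit), so the corollary should be read, as in its classical counterpart, as the inequality $\phi\mbox{-fd}_R(\lim\limits_{\longrightarrow}M_i)\leq \sup\{\phi\mbox{-fd}_R(M_i)\}$, which is exactly what your proof establishes.
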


Now, we are ready to introduce the $\phi$-weak global dimension of a ring  in terms of $\phi$-flat dimensions.
\begin{definition}\label{w-phi-flat }
The \emph{$\phi$-weak global dimension} of a ring $R$ is defined by
\begin{center}
$\phi$-\cwd$(R) = \sup\{\phi$-\fd$_R(M) | M $ is an $R$-module$\}.$
\end{center}
\end{definition}\label{def-wML}
Obviously, by definition, $\phi$-\cwd$(R)\leq $\cwd$(R)$.  Notice that if $R$ is an integral domain, then $\phi$-\cwd$(R)=$\cwd$(R)$. The following result can easily deduced by Proposition \ref{w-phi-flat d} and so we omit its proof.

\begin{theorem}\label{w-g-flat}
Let $R$ be an $\NP$-ring. The following statements are equivalent for $R$.
\begin{enumerate}
 \item  $\phi$-\cwd$(R)\leq  n$.
    \item  $\phi$-\fd$_R(M)\leq n$ for all $R$-modules $M$.
    \item $\Tor^R_{n+k}(T, M)=0$ for all $R$-modules $M$, all $\phi$-torsion $T$ and all positive integer $k$.

     \item  $\Tor^R_{n+k}(R/I, M)=0$ for all $R$-modules $M$, all  nonnil ideals $I$ of $R$ and all positive integer $k$.
    \item  $\Tor^R_{n+k}(R/I, M)=0$ for all $R$-modules $M$, all  finitely generated nonnil ideals $I$ of $R$ and all positive integer $k$.
            \item $\Tor^R_{n+1}(T, M)=0$ for all $R$-modules $M$ and all  $\phi$-torsion $T$.

     \item  $\Tor^R_{n+1}(R/I, M)=0$ for all $R$-modules $M$ and all  nonnil ideals $I$ of $R$.
    \item  $\Tor^R_{n+1}(R/I, M)=0$ for all $R$-modules $M$ and all  finitely generated nonnil ideals $I$ of $R$.
    \item \fd$_R(R/I)\leq n$ for all  nonnil ideals $I$ of $R$.
    \item \fd$_R(R/I)\leq  n$ for all finitely generated   nonnil ideals $I$ of $R$.

\end{enumerate}
Consequently, the $\phi$-weak global dimension of $R$ is determined by the
formulas:
\begin{align*}
\phi\mbox{-}\cwd(R)&= \sup \{\mbox{\fd}_R(R/I) |\ I\ is\ a\  nonnil\ ideal\ of\ R\}\\
&= \sup \{\mbox{\fd}_R(R/I) |\ I\ \mbox{is\ a\ finitely\ generated\ nonnil\ ideal\ of}\ R\} .
\end{align*}
\end{theorem}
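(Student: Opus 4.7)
The strategy is to regard each of (1)--(7) as a statement that quantifies Proposition \ref{w-phi-flat d} over all modules or all ideals. The module-level backbone is already in that proposition, so the only new content is to globalise it and to pass between ``finitely generated nonnil'' and ``arbitrary nonnil'' ideals.

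First, (1) $\Leftrightarrow$ (2) is the definition of $\phi$-w.gl.dim$(R)$. For (2) $\Leftrightarrow$ (3) $\Leftrightarrow$ (4) $\Leftrightarrow$ (5) I would apply Proposition \ref{w-phi-flat d} pointwise: each of (3), (4), (5) is the statement ``for every $M$ the corresponding Tor-vanishing condition of Proposition \ref{w-phi-flat d}(2), (3) or (4) holds'', which is exactly ``$\phi$-fd$_R(M)\leq n$ for every $M$'', namely (2). The implications (2) $\Rightarrow$ (6) $\Rightarrow$ (7) are immediate by specialising $M$ to $R/I$.

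For (7) $\Rightarrow$ (6) I would use a direct-limit argument: fix any non-nilpotent element $a$ of a nonnil ideal $I$ and write $I$ as the directed union of the finitely generated ideals $J$ with $a\in J\subseteq I$; each such $J$ is automatically nonnil, hence $R/I=\varinjlim_{\alpha}R/I_{\alpha}$ with $\phi$-fd$_R(R/I_{\alpha})\leq n$ by (7). The direct-limit corollary following Proposition \ref{w-phi-flat d} then yields $\phi$-fd$_R(R/I)\leq n$.

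The main step, and the one I expect to require the most care, is closing the cycle via (7) $\Rightarrow$ (5): for an arbitrary $M$ and finitely generated nonnil $I$ one must show $\Tor^R_{n+k}(R/I,M)=0$ for every $k\geq 1$. Hypothesis (7) together with Proposition \ref{w-phi-flat d}(1) $\Leftrightarrow$ (7) supplies a flat resolution $0\to F_n\to F_{n-1}\to\cdots\to F_0\to R/I\to 0$ in which $F_0,\ldots,F_{n-1}$ are flat and $F_n$ is strongly $\phi$-flat; standard dimension-shifting along this resolution reduces the desired vanishing to $\Tor^R_{k}(F_n,M)=0$ for all $k\geq 1$. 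Strong $\phi$-flatness of $F_n$ only directly supplies this against $\phi$-torsion modules, so the genuine obstacle is promoting the vanishing to an arbitrary $M$. I would attempt this through the canonical short exact sequence $0\to\phi\mbox{-}tor(M)\to M\to M/\phi\mbox{-}tor(M)\to 0$, the extension and direct-limit stability of $\phi$-flat dimension (Proposition \ref{bas-sfsi} together with the first corollary after Proposition \ref{w-phi-flat d}), and a separate analysis of the $\phi$-torsion-free quotient $M/\phi\mbox{-}tor(M)$. Once the cycle is closed, the two displayed formulas for $\phi$-w.gl.dim$(R)$ read off immediately from the equivalences (1) $\Leftrightarrow$ (6) $\Leftrightarrow$ (7).
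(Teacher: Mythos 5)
Most of your proposal is fine and is exactly what the paper has in mind: the paper in fact \emph{omits} the proof of this theorem, asserting only that it ``can easily be deduced from Proposition \ref{w-phi-flat d}'', and your treatment of $(1)\Leftrightarrow(2)\Leftrightarrow(3)\Leftrightarrow(4)\Leftrightarrow(5)$ (pointwise application of Proposition \ref{w-phi-flat d}), of $(2)\Rightarrow(6)\Rightarrow(7)$, and of $(7)\Rightarrow(6)$ via the directed union $I=\varinjlim J$ over finitely generated nonnil $J\subseteq I$ is correct. The displayed formulas then follow once the cycle is closed.

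The problem is that you never close the cycle. Your ``main step'' $(7)\Rightarrow(5)$ is announced but not carried out: after dimension-shifting along $0\to F_n\to\cdots\to F_0\to R/I\to 0$ you must show $\Tor^R_k(F_n,M)=0$ for \emph{arbitrary} $M$, where $F_n$ is only known to be strongly $\phi$-flat. Your reduction via $0\to\phi\mbox{-}tor(M)\to M\to M/\phi\mbox{-}tor(M)\to 0$ correctly disposes of the $\phi$-torsion part, but for the $\phi$-torsion-free quotient $N$ you offer no argument, and none of the tools you cite applies: strong $\phi$-flatness of $F_n$ says nothing against $\phi$-torsion-free modules, and Proposition \ref{bas-sfsi} together with filtrations of finitely generated $\phi$-torsion modules by cyclic modules $R/J$ ($J$ nonnil) only upgrades $(6)$ to ``$\phi$-fd$_R(T)\leq n$ for all $\phi$-torsion $T$'', which is still a statement about $\Tor$ of pairs of $\phi$-torsion modules, not statement $(3)$. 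This is a genuine gap, not a routine verification: the classical argument that $\mathrm{w.gl.dim}(R)=\sup\{\mathrm{fd}_R(R/I)\}$ works because $\mathrm{fd}_R(R/I)\leq n$ is by definition a vanishing statement against \emph{all} modules in the second variable, whereas $\phi$-fd$_R(R/I)\leq n$ only controls $\Tor$ against $\phi$-torsion modules. For instance, in the case $n=0$ the implication can be rescued only by a special structural argument ($R/I$ strongly $\phi$-flat for finitely generated nonnil $I$ gives $\Tor^R_1(R/I,R/I)\cong I/I^2=0$, hence $I$ is generated by an idempotent and $R/I$ is projective, whence flat); nothing in your proposal, nor in the paper's one-line justification, supplies the analogue of this step for general $n$, so the implication $(7)\Rightarrow(1)$--$(5)$ remains unproved.
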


\begin{theorem}\label{ideal-cwd}
Let $R$ be a  strongly $\phi$-ring. Then the following statements hold.
\begin{enumerate}
 \item $\cwd(R/\Nil(R))\leq \phi\mbox{-}\cwd(R).$

 \item $\phi\mbox{-}\cwd(R)-\fd_R(R/\Nil(R))\leq \cwd(R/\Nil(R)).$
\end{enumerate}
\end{theorem}
\begin{proof} (1) Suppose  $\cwd(R/\Nil(R))= n$.
Then there exists  a  nonnil ideal $I$ of $R$ and an $R/\Nil(R)$-module $M$ such that $$\Tor^{R/\Nil(R)}_{n}(R/I\otimes_RR/\Nil(R),M)\cong\Tor^{R/\Nil(R)}_{n}(R/I,M)\not=0.$$
Note that $R/\Nil(R)$ is $\phi$-flat, and then, by Theorem \ref{sp-ss}, we have $\Tor_n^R(R/I,R/\Nil(R))=0$ for all $n\geq 1.$ So $$\Tor^{R}_{n}(R/I,M)\cong\Tor^{R/\Nil(R)}_{n}(R/I\otimes_RR/\Nil(R),M)\not=0,$$ and hence $\mbox{\fd}_R(R/I)\geq n$. It follows by Theorem \ref{w-g-flat} that $\phi\mbox{-}\cwd(R)\geq n$.

(2) It immediately  follows by \cite[Theorem 3.8.5]{fk16} and Theorem \ref{w-g-flat}.
\end{proof}

It is natural to ask the question:
\begin{question}\label{op2}
Let $R$ be a strongly $\phi$-ring. Is the following equation holds? $$\cwd(R/\Nil(R))=\phi\mbox{-}\cwd(R).$$
\end{question}
We can verify it in the following case.

\begin{proposition}\label{ideal-1}
Let $D$ be an integral domain, $Q$ its quotient field and $V$ a $Q$-linear space. Then $\phi$-w.\gl$(D(+)V)=$w.\gl$(D)$.
\end{proposition}
\begin{proof}  Set $R=D(+)V$. Assume w.\gl$(D)\leq n$. Let $M$ be an $R$-module, Then $M$ is naturally an $D$-module. Let $J$ be an nonnil ideal of $R$. Then by
 \cite[Corollary3.4]{DW09}, we have $J=I(+)V$ with $I$ a nonzero $D$-ideal. Note that  $R$ is a flat $D$-module. By \cite[Proposition 4.1.2]{CE56} we have $$\Tor_{n+1}^R(R/J,M)\cong\Tor_{n+1}^R(D/I\otimes_DR,M)\cong \Tor_{n+1}^D(D/I,M)=0.$$ So $\phi$-w.\gl$(D(+)V)\leq $w.\gl$(D)$. The result follows by Theorem \ref{ideal-cwd}.
\end{proof}

It is well known that a  ring $R$ with weak global dimension $0$ is exactly a \emph{von Neumann regular ring}, equivalently $a\in (a^2)$ for any $a\in R$.  Recall from  \cite{ZWT13} that a $\phi$-ring $R$ is said to be \emph{$\phi$-von Neumann regular} provided that every $R$-module is $\phi$-flat. A $\phi$-ring $R$  is  $\phi$-von Neumann regular, if and only if there is an element $x\in R$ such that $a = xa^2$ for any non-nilpotent element $a\in R$, if and only if $R/\Nil(R)$ is a von Neumann regular $\phi$-ring, i.e.,  $R/\Nil(R)$ is a field (see \cite[Theorem 4.1]{ZWT13}). Now, we characterize $\phi$-von Neumann regular rings in terms of strongly $\phi$-flat modules and $\phi$-weak global dimensions.

\begin{theorem}\label{w-g-flat-0}
Let $R$ be a $\phi$-ring. The following statements are equivalent for $R$:
\begin{enumerate}
    \item $\phi$-\cwd$(R) =0$;
    \item every $R$-module is strongly $\phi$-flat;
    \item  $R$  is a $\phi$-von Neumann regular ring.
\end{enumerate}
\end{theorem}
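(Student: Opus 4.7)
The plan is to prove the chain $(1) \Leftrightarrow (2)$ directly, $(2) \Rightarrow (3)$ by the obvious implication from strongly $\phi$-flat to $\phi$-flat, and $(3) \Rightarrow (2)$ by the key structural observation that in a $\phi$-von Neumann regular ring, every nonnil ideal is the whole ring.

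For $(1) \Leftrightarrow (2)$, I would simply unwind definitions: $\phi\mbox{-w.gl.dim}(R) = 0$ means $\phi\mbox{-fd}_R(M) \le 0$ for every $R$-module $M$, which by the remark right after Definition~\ref{w-phi-flat } is equivalent to every $M$ being strongly $\phi$-flat. For $(2) \Rightarrow (3)$, note that a strongly $\phi$-flat module is $\phi$-flat (take $n=1$ in Definition~\ref{nonnil-regu}(1)), so (2) implies every module is $\phi$-flat, which is the definition of $\phi$-von Neumann regular.

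The one substantive direction is $(3) \Rightarrow (2)$. Here I would invoke the characterization from \cite[Theorem 4.1]{ZWT13} quoted just before the theorem: $R$ is $\phi$-von Neumann regular iff $R/\Nil(R)$ is a field. Given a nonnil ideal $I$ of $R$, pick $x \in I \setminus \Nil(R)$. Since $R$ is a $\phi$-ring, $\Nil(R)$ is divided prime, so $\Nil(R) \subsetneq (x) \subseteq I$. Then $I/\Nil(R)$ is a nonzero ideal of the field $R/\Nil(R)$, hence equals $R/\Nil(R)$; combined with $\Nil(R) \subseteq I$ this forces $I = R$. Therefore $R/I = 0$ for every nonnil ideal $I$, and so trivially $\Tor^R_n(R/I, M) = 0$ for all $n \ge 1$ and every $R$-module $M$. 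By Lemma~\ref{nonnil-regu}(1), every $R$-module is strongly $\phi$-flat.

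The only potential obstacle is noticing that the strongly $\phi$-flat condition collapses to triviality in this setting; once one sees that nonnil ideals in a $\phi$-von Neumann regular ring are improper, there is nothing left to compute. No higher Tor machinery or co-syzygy arguments are needed.
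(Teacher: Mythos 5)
Your proposal is correct, and the directions $(1)\Leftrightarrow(2)$ and $(2)\Rightarrow(3)$ match the paper exactly. For the substantive direction $(3)\Rightarrow(2)$, however, you take a genuinely different and more elementary route. The paper argues that every non-nilpotent element $a$ is a unit (since $R/\Nil(R)$ is a field, $(1-ab)^n=0$ for some $b$, so $ab$ and hence $a$ is invertible), concludes that $R$ is a $\ZN$-ring, and then invokes Theorem \ref{sp-ss} together with \cite[Theorem 4.1]{ZWT13} to upgrade ``every module is $\phi$-flat'' to ``every module is strongly $\phi$-flat.'' You instead observe that the same field property, combined with the divided-prime condition $\Nil(R)\subsetneq(x)\subseteq I$ for $x\in I\setminus\Nil(R)$, forces every nonnil ideal to equal $R$, so the conditions $\Tor^R_n(R/I,M)=0$ in Lemma \ref{nonnil-regu}(1) are vacuous. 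Your observation is a strictly stronger structural fact (every nonnil ideal contains a unit, hence is improper, so the only $\phi$-torsion module is $0$), and it buys you independence from the change-of-rings machinery of Theorem \ref{sp-ss}; the paper's route, by contrast, factors the argument through the $\ZN$ property and reuses a general theorem, which keeps the proof uniform with the parallel injective statement later in the paper. Both arguments are sound.
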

\begin{proof} $(1)\Leftrightarrow (2)$ By definition.

$(2)\Rightarrow (3)$: It follow by \cite[Theorem 4.1]{ZWT13}.

$(3)\Rightarrow (2)$: Suppose $R$  is a $\phi$-von Neumann regular ring. Then we claim that $R$ is a $\ZN$-ring. Indeed, let $a$ be a non-nilpotent element in $R$. Since $R/\Nil(R)$ is a field by \cite[Theorem 4.1]{ZWT13}, we have $(1-ab)^n=0$ for some $b\in R$ and positive integer $n$. So $a$ is a unit, and thus a non-zero-divisor. Now $(2)$ follows by Theorem \ref{sp-ss} and \cite[Theorem 4.1]{ZWT13}.
\end{proof}

For a  $\phi$-ring $R$, there is a ring homomorphism $\phi:\T(R)\rightarrow R_{\Nil(R)}$ such that $\phi(a/b)=a/b$ where $a\in R$ and $b$ is a regular element. Denote by the ring $\phi(R)$ the image of $\phi$ restricted to $R$. Then $\phi(R)$ is  a strong  $\phi$-ring.  Recall that a regular ideal $I$ of $R$ is called \emph{invertible} if $II^{-1}=R$ where $I^{-1}=\{x\in\T(R)|Ix\subseteq R\}$.
Recall from \cite{FA04} that a nonnil ideal $I$ of a $\phi$-ring $R$ is said to be  \emph{$\phi$-invertible} provided that $\phi(I)$ is an invertible ideal of $\phi(R)$.

Following \cite{FA04}, a $\phi$-ring  $R$ is said to be a \emph{$\phi$-\Prufer\ ring} if every finitely generated nonnil ideal $I$ is $\phi$-invertible, i.e., $\phi(I)\phi(I^{-1})=\phi(R)$. A $\phi$-ring  $R$ is said to be a \emph{$\phi$-chain ring} ($\phi$-CR for short) if for any $a,b\in R-\Nil(R)$, either $a|b$ or $b|a$ in $R$. It follows from \cite[Corollary 2.10]{FA04} that a $\phi$-ring $R$ is $\phi$-\Prufer, if and only if $R_{\fkm}$ is a $\phi$-CR for any maximal ideal $\fkm$ of $R$, if and only if $R/\Nil(R)$ is a \Prufer\ domain, if and only if $\phi(R)$ is \Prufer.  For a strong $\phi$-ring $R$, Zhao \cite[Theorem 4.3]{Z18} showed that $R$ is a $\phi$-\Prufer\ ring if and only if  all $\phi$-torsion free $R$-modules are $\phi$-flat, if and only if  each submodule of a $\phi$-flat $R$-module is $\phi$-flat, if and only if  each nonnil ideal of $R$ is $\phi$-flat.

\begin{theorem}\label{w-g-flat-1}
Let $R$ be a $\phi$-ring. The following statements are equivalent for $R$:
\begin{enumerate}
    \item $\phi$-\cwd$(R)\leq 1$;
    \item every submodule of flat $R$-module is strong $\phi$-flat;
      \item every submodule of  strong $\phi$-flat $R$-module is strong $\phi$-flat;
    \item $R$  is a $\phi$-\Prufer\ strong $\phi$-ring.
\end{enumerate}
\end{theorem}
\begin{proof} $(1)\Leftrightarrow (2)\Leftrightarrow (3)$ By Theorem \ref{w-g-flat}.

$(4)\Rightarrow (2)$: It follow by \cite[Theorem 4.1]{ZWT13}.

$(2)\Rightarrow (4)$: Since every submodule of flat $R$-module is strong $\phi$-flat, every ideal of $R$ is $\phi$-flat.  It follows by \cite[Corollary 2.8]{KMH22} that $R$ is a strong $\phi$-ring. Hence the result follows by \cite[Theorem 4.3]{Z18} and Theorem \ref{sp-ss}.
\end{proof}

Note that when \cwd$(R/\Nil(R))\leq 1$, then Question \ref{op2} holds by  Theorem \ref{w-g-flat-0} and Theorem \ref{w-g-flat-1}.

\begin{corollary}
Let $D$ be an integral domain, $Q$ its quotient field and $V$ a $Q$-linear space. Then $D(+)V$ is a $\phi$-\Prufer\ ring if and only if $D$ is a \Prufer\ domain.
\end{corollary}
\begin{proof} Note that $D(+)V$ is a strong $\phi$-ring. So the result follows by Proposition \ref{ideal-1}
 and Theorem \ref{w-g-flat-0}.
\end{proof}

The following example shows that the $\phi$-weak global dimensions of $\phi$-\Prufer\ rings can be sufficient large, and so the condition ``$R$  is a strong $\phi$-ring'' in Theorem \ref{w-g-flat-0}(4) cannot be removed.
\begin{example}\label{infty example}
Let $R$ be the ring in Example \ref{flat-nots}.  Then $R$ is a  $\phi$-\Prufer\ rings since $R/\Nil(R)\cong \mathbb{Z}$ is a \Prufer\ domain. It is easy to verify that there is a projective resolution of $\langle p\rangle (+)\mathbb{Z}(\p^{\infty})$ is $$\cdots\rightarrow R\xrightarrow{d_4} R\xrightarrow{d_3} R\xrightarrow{d_2} R\xrightarrow{d_1} R\xrightarrow{d_0} \langle p\rangle (+)\mathbb{Z}(\p^{\infty})\rightarrow0,$$
where $d_n$ is a multiplication by $(\p,0)$ when $n$ is even, and  a multiplication by $(0,\frac{1}{\p}+\mathbb{Z})$ when $n$ is odd. Note that the above projective resolution is not split. So the global dimension, and hence the weak global dimension of $R$ is infinite. By Example \ref{flat-nots-5}, every strongly $\phi$-flat $R$-module is flat. Hence the $\phi$-weak global dimension of $R$ is also infinite.
\end{example}

\section{on $\phi$-injective dimensions of modules and $\phi$-global dimensions of rings}

Let $R$ be a ring. It is well known that the injective dimension of an $R$-module $M$ is defined as the shortest injective resolution of $M$ and the weak global dimension of $R$ is the supremum of the injective dimensions  of all $R$-modules.  We now introduce the notion of $\phi$-injective dimension of an $R$-module as follows.

\begin{definition}\label{w-phi-injective }
Let $R$ be a ring and $M$ an $R$-module. We write $\phi$-id$_R(M)\leq n$  ($\phi$-id abbreviates  \emph{$\phi$-injective dimension}) if there is an exact sequence of $R$-modules
$$ 0\rightarrow  M \rightarrow E_0 \rightarrow E_1\rightarrow \cdots\rightarrow  E_n\rightarrow 0   \ \ \ \ \ \ \ \ \ \ \ \ \ \ \ \ \ \ \ \ \ \ \ \ \ \ \ \ \ \ \ \ \ \ \ \ \ \ \ (\heartsuit)$$
where each $E_i$ is  strongly nonnil-injective for $i=0,...,n$. The exact sequence $(\heartsuit)$ is said to be  a $\phi$-injective resolution of length $n$ of $M$. If such finite resolution does not exist, then we say $\phi$-id$_R(M)=\infty$; otherwise,  define $\phi$-id$_R(M) = n$ if $n$ is the length of the shortest $\phi$-injective resolution of $M$.
\end{definition}\label{def-wML}
It is obvious that an $R$-module $M$ is strongly nonnil-injective if and only if $\phi$-id$_R(M)=0$. Certainly, $\phi$-id$_R(M)\leq$id$_R(M)$. If $R$ is an integral domain, then $\phi$-id$_R(M)=$id$_R(M)$

\begin{proposition}\label{w-phi-injective d}
Let $R$ be an $\NP$-ring. The following statements are equivalent for an $R$-module $M$:
\begin{enumerate}
    \item $\phi$-id$_R(M)\leq n$;
    \item $\Ext_R^{n+k}(T, M)=0$ for all $\phi$-torsion $R$-modules $T$ and all positive integer $k$;
        \item $\Ext_R^{n+k}(R/I,M)=0$ for all nonnil ideals $I$ and all positive integer $k$;
    \item if $0\rightarrow  M \rightarrow E_0 \rightarrow E_1\rightarrow \cdots\rightarrow  E_n\rightarrow 0  $ is an exact sequence, where $E_0, E_1, . . . , E_{n-1}$ are strongly nonnil-injective $R$-modules, then $E_n$ is strongly nonnil-injective;
    \item if $0\rightarrow  M \rightarrow E_0 \rightarrow E_1\rightarrow \cdots\rightarrow  E_n\rightarrow 0 $ is an exact sequence, where $E_0, E_1, . . . , E_{n-1}$ are injective $R$-modules, then $E_n$ is strongly nonnil-injective;
    \item there exists an exact sequence  $0\rightarrow  M \rightarrow E_0 \rightarrow E_1\rightarrow \cdots\rightarrow  E_n\rightarrow 0 $ ,
where $E_0, E_1, . . . , E_{n-1}$ are injective $R$-modules, then $E_n$ is strongly nonnil-injective.
\end{enumerate}
\end{proposition}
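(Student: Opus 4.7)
The plan is to mirror the proof of Proposition \ref{w-phi-flat d} and run the cyclic chain of implications $(1)\Rightarrow(2)\Rightarrow(3)\Rightarrow(4)\Rightarrow(5)\Rightarrow(6)\Rightarrow(1)$, dualizing from $\Tor$-vanishing and syzygies to $\Ext$-vanishing and co-syzygies. Most of the steps are either immediate or routine long-exact-sequence manipulations, with Lemma \ref{nonnil-regu}(2) playing the role of the characterization that lets us pass between ``strongly nonnil-injective'' and ``$\Ext^{k}(R/I,-)=0$ for all $k\geq 1$ and all nonnil $I$''.

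For $(1)\Rightarrow(2)$ I would induct on $n$. The base case $n=0$ is the definition of strongly nonnil-injective. For $n>0$, given a $\phi$-injective resolution $(\heartsuit)$ of length $n$, split off the first piece as $0\to M\to E_0\to L_0\to 0$, observe that $L_0$ has $\phi$-injective dimension at most $n-1$, and for any $\phi$-torsion $T$ read off from the associated long exact sequence the isomorphism $\Ext_R^{n+k}(T,M)\cong \Ext_R^{n-1+k}(T,L_0)$, which vanishes by the inductive hypothesis. The implication $(2)\Rightarrow(3)$ is trivial because $R/I$ is $\phi$-torsion whenever $I$ is a nonnil ideal (the whole module is annihilated by such an $I$).

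For $(3)\Rightarrow(4)$ I would apply dimension shifting to the given exact sequence: setting $L_i=\mathrm{Im}(E_{i-1}\to E_i)$ for $1\leq i\leq n-1$ and $L_n=E_n$, the short exact sequences $0\to L_i\to E_i\to L_{i+1}\to 0$ together with $\Ext_R^{\geq 1}(R/I,E_i)=0$ (from Lemma \ref{nonnil-regu}(2) applied to the strongly nonnil-injective $E_i$) yield $\Ext_R^{k}(R/I,E_n)\cong \Ext_R^{k+1}(R/I,L_{n-1})\cong\cdots\cong \Ext_R^{n+k}(R/I,M)=0$ for every nonnil ideal $I$ and every $k\geq 1$; Lemma \ref{nonnil-regu}(2) then gives that $E_n$ is strongly nonnil-injective. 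The step $(4)\Rightarrow(5)$ is immediate since injective modules are strongly nonnil-injective.

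For $(5)\Rightarrow(6)$ I would invoke the fact that every $R$-module has an injective resolution $0\to M\to E_0\to E_1\to \cdots$; truncating at the $n$-th cosyzygy produces a sequence $0\to M\to E_0\to\cdots\to E_{n-1}\to E_n\to 0$ with $E_0,\dots,E_{n-1}$ injective, and $(5)$ then forces $E_n$ to be strongly nonnil-injective. Finally $(6)\Rightarrow(1)$ is clear from the definition of $\phi$-id$_R(M)$. The only place that requires genuine care is $(3)\Rightarrow(4)$, where one must be sure the dimension shift lands on $R/I$ for arbitrary nonnil $I$ so that Lemma \ref{nonnil-regu}(2) actually applies; I expect this to be the main, though still modest, obstacle, and no new ideas beyond those already used for the flat analogue should be needed.
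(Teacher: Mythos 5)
Your proposal is correct and follows essentially the same route as the paper: the same cyclic chain of implications, with induction plus the long exact sequence for $(1)\Rightarrow(2)$, dimension shifting through the cosyzygies combined with Lemma \ref{nonnil-regu}(2) for $(3)\Rightarrow(4)$, and truncation of an injective resolution for $(5)\Rightarrow(6)$. No substantive differences from the paper's argument.
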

\begin{proof}
$(1) \Rightarrow(2)$: We prove $(2)$ by induction on $n$.  For the case $n = 0$, (2) trivially holds  as $M$ is strongly nonnil-injective. If $n>0$, then
there is an exact sequence  $0\rightarrow  M \rightarrow E_0 \rightarrow E_1\rightarrow \cdots\rightarrow  E_n\rightarrow 0$,
where each $E_i$ is strongly nonnil-injective for $i=0,...,n$. Set $K_0 = \Coker(E_0\rightarrow M)$. Then both
$0 \rightarrow  M\rightarrow  E_0 \rightarrow  K_0  \rightarrow  0 $ and $0 \rightarrow  K_0 \rightarrow E_1\rightarrow \cdots\rightarrow  E_n\rightarrow 0$ are exact. So $\phi$-id$_R(K_0)\leq n-1$. By induction, $\Ext_R^{n-1+k}(T, K_0)=0$
for all $\phi$-torsion $R$-modules $T$ and all positive integer $k$. Thus, it follows from the exact sequence   $$0=\Ext_R^{n+k-1}(T,E_0 )\rightarrow\Ext_R^{n+k-1}(T,K_0 )\rightarrow \Ext_R^{n+k}(T, M )\rightarrow \Ext_R^{n+k}(T, E_0)=0$$ that
$\Ext_R^{n+k}(M, T)\cong \Ext_R^{n-1+k}(T, K_0 )=0$.

$(2) \Rightarrow(3)$ and $(4) \Rightarrow(5)$:  Trivial.

$(3) \Rightarrow(4)$: Let $K_0 = \Coker( M \rightarrow E_0)$ and $K_i = \Coker(E_{i-1} \rightarrow  E_i)$, where
$i = 1, . . . , n-1$. Then $K_{n-1}\cong E_n$. Since all $E_0, E_1, . . . , E_{n-1}$ are strongly nonnil-injective,
$\Ext_R^k ( R/I,E_n)\cong\Ext_R^{1+k} ( R/I,K_{n-2})\cong\cdots\cong \Ext_R^{n+k}(R/I, M)=0$  for all  nonnil ideal $I$ and any  positive integer $k$   by dimensional shift. Hence $E_n$
is strongly nonnil-injective by Lemma \ref{nonnil-regu}.

$(5) \Rightarrow (6)$: Consider the injective resolution of $M$: $0\rightarrow  M \rightarrow E_0 \rightarrow E_1\rightarrow \cdots\rightarrow E_{n-2}\xrightarrow{d_{n-2}}   E_{n-1}\rightarrow \cdots$,
where $E_0, E_1,\dots, E_{n-1}$
 are injective $R$-modules. Then $E_n:=\Coker(d_{n-2})$ is strongly nonnil-injective by $(5)$.

$(6) \Rightarrow (1)$: Trivial.
\end{proof}
\begin{corollary}
Let $R$ be an $\NP$-ring, $M$ an $R$-module and $E$ an injective cogenerator of $R$-Mod. Then $\phi$-\fd$_R(M)=\phi$-id$_R(\Hom_R(M,E))$
\end{corollary}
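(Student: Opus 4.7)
The plan is to reduce the claim to a single natural isomorphism combined with the vanishing test given by the injective cogenerator $E$, and then invoke the characterizations already established in Proposition~\ref{w-phi-flat d} and Proposition~\ref{w-phi-injective d}.

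First I would recall the standard isomorphism
$$\Ext_R^n(T,\Hom_R(M,E))\;\cong\;\Hom_R(\Tor_n^R(T,M),E)$$
valid for any $R$-modules $T,M$ and any injective $R$-module $E$ (this is \cite[Proposition~4.1.3]{CE56} or a consequence of the fact that $\Hom_R(-,E)$ is exact and converts a flat resolution of $M$ into an injective resolution of $\Hom_R(M,E)$). Because $E$ is a cogenerator, $\Hom_R(N,E)=0$ if and only if $N=0$; hence for any $\phi$-torsion module $T$ and any $n\geq 1$,
$$\Tor_n^R(T,M)=0\ \Longleftrightarrow\ \Ext_R^n(T,\Hom_R(M,E))=0.$$

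Next I would run this equivalence along the two parallel characterizations. By Proposition~\ref{w-phi-flat d}, $\phi\mbox{-fd}_R(M)\leq n$ iff $\Tor_{n+k}^R(T,M)=0$ for every $\phi$-torsion $T$ and every $k\geq 1$; by Proposition~\ref{w-phi-injective d}, $\phi\mbox{-id}_R(\Hom_R(M,E))\leq n$ iff $\Ext_R^{n+k}(T,\Hom_R(M,E))=0$ for every $\phi$-torsion $T$ and every $k\geq 1$. The displayed biconditional shows these two conditions coincide for every $n\geq 0$, which gives $\phi\mbox{-fd}_R(M)=\phi\mbox{-id}_R(\Hom_R(M,E))$, with the equality extending to $\infty$ by the usual convention.

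The only mild subtlety (and the main thing to check honestly) is the direction of the isomorphism and making sure it is applied to the correct class of test modules: one needs $T$ to range over $\phi$-torsion modules rather than over $R/I$ for nonnil $I$, because a priori $\phi$-injective dimension is characterized by vanishing of $\Ext$ on all $\phi$-torsion modules (item~(2) of Proposition~\ref{w-phi-injective d}), and symmetrically for $\Tor$; fortunately both propositions provide exactly this description, so no additional argument (such as reduction to finitely generated nonnil ideals) is needed. Everything else is formal, so the proof is short.
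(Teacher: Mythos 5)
Your proposal is correct and follows essentially the same route as the paper: the paper's own (terse) proof cites exactly Proposition~\ref{w-phi-flat d}, Proposition~\ref{w-phi-injective d}, and the isomorphism $\Ext_R^n(T,\Hom_R(M,E))\cong\Hom_R(\Tor_n^R(T,M),E)$, with the cogenerator property supplying the vanishing equivalence you spell out. No differences worth noting.
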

\begin{proof}
It follows by Proposition \ref{w-phi-flat d}, Proposition  \ref{w-phi-injective d} and the adjoint isomorphism: $\Ext_R^n(T,\Hom_R(M,E))\cong\Hom_R(\Tor_n^R(T,M),E)=0$.
\end{proof}

The proof the following two results are similar with the classical ones and so we omit its proof.

\begin{corollary}
Let $R$ be an $\NP$-ring and $0\rightarrow A\rightarrow B\rightarrow C\rightarrow 0$ be an exact sequence of $R$-modules. Then the following results hold.
\begin{enumerate}
    \item $\phi$-id$_R(A)\leq 1+\max\{\phi$-id$_R(B),\phi$-id$_R(C)\}$;
    \item if $\phi$-id$_R(B)<\phi$-id$_R(A)$, then $\phi$-id$_R(C)= \phi$-id$_R(A)-1\geq\phi$-id$_R(B)$.
\end{enumerate}
\end{corollary}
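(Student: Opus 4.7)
The plan is to work entirely through the long exact sequence of $\Ext^{\ast}_R(T,-)$ induced by $0\to A\to B\to C\to 0$, for $T$ ranging over the $\phi$-torsion $R$-modules, and then read off the conclusions from the Ext-vanishing characterization of $\phi$-$\mathrm{id}_R$ supplied by Proposition \ref{w-phi-injective d}. Thus, throughout, $T$ will denote an arbitrary $\phi$-torsion $R$-module and the induction/vanishing statements are understood to quantify over such $T$.

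For $(1)$, set $s=\max\{\phi\mbox{-id}_R(B),\phi\mbox{-id}_R(C)\}$ (allowing $s=\infty$, in which case there is nothing to prove). For every $k\geq 2$, the three-term piece
$$\Ext_R^{s+k-1}(T,C)\;\lr\;\Ext_R^{s+k}(T,A)\;\lr\;\Ext_R^{s+k}(T,B)$$
has both outer terms equal to zero by Proposition \ref{w-phi-injective d} applied to $C$ and $B$. Hence $\Ext_R^{(s+1)+j}(T,A)=0$ for all $j\geq 1$, which is exactly $\phi\mbox{-id}_R(A)\leq s+1$ by Proposition \ref{w-phi-injective d} again.

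For $(2)$, write $n=\phi\mbox{-id}_R(B)$ and $m=\phi\mbox{-id}_R(A)$, so $n<m$. The key observation is that for every $j\geq n+1$, the segment
$$\Ext_R^{j-1}(T,B)\;\lr\;\Ext_R^{j-1}(T,C)\;\lr\;\Ext_R^{j}(T,A)\;\lr\;\Ext_R^{j}(T,B)$$
has both outer terms zero by Proposition \ref{w-phi-injective d}, giving a natural isomorphism $\Ext_R^{j-1}(T,C)\cong\Ext_R^{j}(T,A)$. From this I would split into cases: if $m=\infty$, the isomorphism transports nonvanishing in arbitrarily high degrees from $A$ to $C$, forcing $\phi\mbox{-id}_R(C)=\infty$; if $m\geq n+2$, the isomorphism at $j=m$ provides the lower bound $\phi\mbox{-id}_R(C)\geq m-1$, while $j>m$ supplies the matching upper bound. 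The only genuinely delicate case is the boundary $m=n+1$, which I expect to be the main obstacle; there, I would pick a witness $T_0$ with $\Ext_R^{n+1}(T_0,A)\neq 0$ and use the piece
$$\Ext_R^{n}(T_0,C)\;\lr\;\Ext_R^{n+1}(T_0,A)\;\lr\;\Ext_R^{n+1}(T_0,B)=0$$
to get $\Ext_R^{n}(T_0,C)\neq 0$, hence $\phi\mbox{-id}_R(C)\geq n=m-1$, while the isomorphism for $j\geq n+1$ and the vanishing of $\Ext^{m+k}(T,A)$ for $k\geq 1$ yield $\phi\mbox{-id}_R(C)\leq m-1$. Combining the cases gives $\phi\mbox{-id}_R(C)=m-1$, and the inequality $m-1\geq n$ is immediate from $m>n$.
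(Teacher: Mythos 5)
Your argument is correct and is precisely the classical dimension-shifting argument via the long exact sequence of $\Ext_R^{\ast}(T,-)$ combined with the $\Ext$-vanishing criterion of Proposition \ref{w-phi-injective d}, which is exactly what the paper intends when it omits the proof as ``similar with the classical ones.'' One cosmetic point: the isomorphism $\Ext_R^{j-1}(T,C)\cong\Ext_R^{j}(T,A)$ actually requires $j\geq n+2$ (at $j=n+1$ the outer term $\Ext_R^{n}(T,B)$ need not vanish, since $\phi$-id$_R(B)=n$ only kills $\Ext_R^{n+k}$ for $k\geq 1$), but everywhere you genuinely invoke the isomorphism you have $j\geq n+2$, and you treat the boundary case $m=n+1$ by a separate surjectivity argument, so nothing breaks.
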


\begin{corollary}
Let $R$ be an $\NP$-ring and $\{M_i\mid i\in\Gamma\}$ be a family of  $R$-modules. Then
\begin{center}
 $\phi$-id$_R(\prod\limits_{i\in\Gamma}M_i)=\sup\{\phi$-id$_R(M_i)\}.$
\end{center}
\end{corollary}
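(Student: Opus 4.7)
The plan is to reduce the statement to a pointwise Ext computation via Proposition \ref{w-phi-injective d}, and then exploit the standard fact that $\Ext_R^k(N,-)$ commutes with direct products in its second argument.

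First I would set $n := \sup\{\phi\text{-id}_R(M_i) \mid i \in \Gamma\}$, allowing $n = \infty$. The key tool is the equivalence (1) $\Leftrightarrow$ (3) of Proposition \ref{w-phi-injective d}, which says that for an $R$-module $N$ we have $\phi\text{-id}_R(N) \leq m$ if and only if $\Ext_R^{m+k}(R/I, N) = 0$ for every nonnil ideal $I$ of $R$ and every $k \geq 1$. The whole argument will consist of translating the claim into this cohomological form.

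For the upper bound $\phi\text{-id}_R(\prod_i M_i) \leq n$ (assuming $n < \infty$; otherwise the inequality is vacuous), I would fix a nonnil ideal $I$ and a positive integer $k$, and invoke the natural isomorphism
$$\Ext_R^{n+k}\bigl(R/I, \textstyle\prod_{i\in\Gamma} M_i\bigr) \;\cong\; \prod_{i\in\Gamma} \Ext_R^{n+k}(R/I, M_i).$$
Each factor on the right vanishes by Proposition \ref{w-phi-injective d} applied to $M_i$, since $\phi\text{-id}_R(M_i) \leq n$. Hence the product vanishes, and another application of Proposition \ref{w-phi-injective d} (now in the direction (3) $\Rightarrow$ (1)) gives $\phi\text{-id}_R(\prod_i M_i) \leq n$.

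For the reverse inequality $\phi\text{-id}_R(M_j) \leq \phi\text{-id}_R(\prod_i M_i)$ for each $j$, I would use the same isomorphism in reverse: if $\Ext_R^{m+k}(R/I, \prod_i M_i) = 0$ for some $m$ and all $k \geq 1$ and all nonnil $I$, then every factor $\Ext_R^{m+k}(R/I, M_j)$ vanishes, so $\phi\text{-id}_R(M_j) \leq m$. Taking $m = \phi\text{-id}_R(\prod_i M_i)$ (the infinite case being trivial) gives the desired bound, and combining the two inequalities yields equality. The only nontrivial ingredient is the commutation of $\Ext$ with products, which is standard and poses no real obstacle; the work is essentially all packaged inside Proposition \ref{w-phi-injective d}, which is why the paper omits the proof.
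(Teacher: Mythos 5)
Your proof is correct and is exactly the classical argument the paper has in mind when it omits the proof: reduce to the criterion $\Ext_R^{m+k}(R/I,-)=0$ via Proposition \ref{w-phi-injective d} and use the standard isomorphism $\Ext_R^{m+k}(R/I,\prod_i M_i)\cong\prod_i\Ext_R^{m+k}(R/I,M_i)$ in both directions. No gaps; this matches the intended (omitted) proof.
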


Now, we are ready to introduce the $\phi$-global dimension of a ring  in terms of nonnil-injective dimensions.
\begin{definition}\label{w-phi-injective }
The \emph{$\phi$-global dimension} of a ring $R$ is defined by
\begin{center}
$\phi$-\cgd$(R) = \sup\{\phi$-id$_R(M) | M $ is an $R$-module$\}.$
\end{center}
\end{definition}\label{def-wML}
Obviously, by definition, $\phi$-\cgd$(R)\leq $\cgd$(R)$.  Notice that if $R$ is an integral domain, then $\phi$-\cgd$(R)=$\cgd$(R)$. The following result can easily deduced by Proposition \ref{w-phi-injective d} and so we omit its proof.

\begin{theorem}\label{w-g-injective}
Let $R$ be an $\NP$-ring. The following statements are equivalent for $R$.
\begin{enumerate}
 \item  $\phi$-\cgd$(R)\leq  n$.
    \item  $\phi$-id$_R(M)\leq n$ for all $R$-modules $M$.
    \item $\Ext_R^{n+k}(T, M)=0$ for all $R$-modules $M$,  all  $\phi$-torsion $T$ and all positive integer $k$.

     \item  $\Ext_R^{n+k}(R/I, M)=0$ for all $R$-modules $M$,  all  nonnil ideals $I$ of $R$ and all positive integer $k$.
         \item $\Ext_R^{n+1}(T, M)=0$ for all $R$-modules $M$ and all  $\phi$-torsion $T$ .
     \item  $\Ext_R^{n+1}(R/I, M)=0$ for all $R$-modules $M$ and  all nonnil ideals $I$ of $R$ .
\end{enumerate}
Consequently, the $\phi$-global dimension of $R$ is determined by the
formulas:
\begin{align*}
\phi\mbox{-}\cgd(R)&= \sup \{\mbox{\pd}_R(R/I) |\ I\ is\ a\  nonnil\ ideal\ of\ R\} .
\end{align*}
\end{theorem}

It follows by  Theorem \ref{w-g-flat} and Theorem \ref{w-phi-injective } that $\phi\mbox{-}\cgd(R)\leq \phi\mbox{-}\cwd(R)$ for any $\NP$-rings. We say an $R$-module $M$ is super-finitely presented if $M$ has a projective resolution with each term finitely generated. It is well-known that the weak global dimensions and global dimensions coincide over Noetherian rings. For $\phi$-dimensions, we have the following result.
\begin{corollary}
Let $R$ be a $\NP$-ring such that every nonnil $R$-ideal  is super-finitely presented. Then   $\phi\mbox{-}\cgd(R)= \phi\mbox{-}\cwd(R)$.
\end{corollary}
\begin{proof} Let $I$ be a nonnil ideal of $R$.  Since $R/I$ is super-finitely presented, $\mbox{\pd}_R(R/I)=\mbox{\fd}_R(R/I)$ as finitely presented flat modules are projective. Hence  $\phi\mbox{-}\cgd(R)= \phi\mbox{-}\cwd(R)$ by  Theorem \ref{w-g-flat} and Theorem \ref{w-phi-injective }.
\end{proof}

\begin{remark} Recall from \cite{A03} that a $\phi$-ring $R$ is said to be nonnil-Noetherian if every nonnil ideal is finitely generated. Trivially, if every nonnil $R$-ideal  is super-finitely presented, then $R$ is nonnil-Noetherian. However, the converse does not hold in general. Indeed, let $R=\mathbb{Z}(+)\bigoplus\limits_{i=1}^\infty(\mathbb{Q}/\mathbb{Z})$. Then $R$ is nonnil-Noetherian. But there exists a nonnil $R$-ideal which is not finitely presented (see \cite[Remark 1.1]{QW22} or \cite[Example 4.11]{HKM22}). However, we do not have an example in hand to distinguish $\phi\mbox{-}\cgd(R)$ and $ \phi\mbox{-}\cwd(R)$ over a nonnil-Noetherian ring $R$.
\end{remark}

\begin{theorem}\label{ideal-cgd}
Let $R$ be an $\NP$-ring.  Then the following statements hold.
\begin{enumerate}
 \item If $R$ is a  strongly $\phi$-ring, then $\cgd(R/\Nil(R))\leq \phi\mbox{-}\cgd(R).$

 \item $\phi\mbox{-}\cgd(R)-\fd_R(R/\Nil(R))\leq \cgd(R/\Nil(R)).$
\end{enumerate}
\end{theorem}
\begin{proof} Suppose  $\cgd(R/\Nil(R))= n$.
So there exists  a  nonnil ideal $I$ of $R$ and an $R/\Nil(R)$-module $M$ such that $$\Ext_{R/\Nil(R)}^{n}(R/I,M)\cong\Ext_{R/\Nil(R)}^{n}(R/I\otimes_RR/\Nil(R),M)\not=0.$$
Note that $\Tor_n^R(R/I,R/\Nil(R))=0$ for all $n\geq 1.$ So $$\Ext_{R}^{n}(R/I,M)\cong\Ext_{R/\Nil(R)}^{n}(R/I\otimes_RR/\Nil(R),M)\not=0,$$ and hence $\mbox{\pd}_R(R/I)\geq n$. It follows by Theorem \ref{w-g-injective} that $\phi\mbox{-}\cgd(R)\geq n$.

(2) It immediately follows by \cite[Theorem 3.8.1]{fk16} and Theorem \ref{w-g-injective}.
\end{proof}

It is natural to ask the question:
\begin{question}\label{op1}
Let $R$ be a strongly $\phi$-ring. Is the following equation holds? $$\cgd(R/\Nil(R))=\phi\mbox{-}\cgd(R).$$
\end{question}
We can verify it in the following case.

\begin{proposition}\label{ideal-2}
Let $D$ be an integral domain, $Q$ its quotient field and $V$ a $Q$-linear space. Then $\phi$-\gl$(D(+)V)=$\gl$(D)$.
\end{proposition}
\begin{proof}  Set $R=D(+)V$. Assume \gl$(D)\leq n$. Let $M$ be an $R$-module, Then $M$ is naturally an $D$-module. Let $J$ be an nonnil ideal of $R$. Then by
 \cite[Corollary 3.4]{DW09}, we have $J=I(+)V$ with $I$ a nonzero$D$-ideal. Note that  $R$ is a flat $D$-module. By \cite[Proposition 4.1.3]{CE56} we have $$\Ext^{n+1}_R(R/J,M)\cong\Ext^{n+1}_R(D/I\otimes_DR,M)\cong \Ext^{n+1}_D(D/I,M)=0.$$ So $\phi$-\gl$(D(+)V)\leq $\gl$(D)$. The result follows by Theorem \ref{ideal-cgd}.
\end{proof}

It was proved in \cite[Theorem 1.7]{ZQ22-p-dp} that a $\phi$-ring $R$ is a  $\phi$-von Neumann regular ring if and only if every $R$-module is  nonnil-injective. Moreover, we have the following result.

\begin{theorem}\label{w-g-injective-0}
Let $R$ be a $\phi$-ring. The following statements are equivalent for $R$:
\begin{enumerate}
    \item $\phi$-\cgd$(R)=0$;
    \item every $R$-module is strongly nonnil-injective;
    \item  $R$  is a $\phi$-von Neumann regular ring.
\end{enumerate}
\end{theorem}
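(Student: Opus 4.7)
The plan is to model this proof on the earlier Theorem \ref{w-g-flat-0} for $\phi$-weak global dimension zero, swapping flat/Tor for injective/Ext throughout. The equivalence (1) $\Leftrightarrow$ (2) is immediate from Definition \ref{def-wML}: $\phi\mbox{-gl.dim}(R) = 0$ says $\phi\mbox{-id}_R(M) \leq 0$ for every $R$-module $M$, which by the remark after Definition \ref{w-phi-injective } is exactly the assertion that every $R$-module is strongly nonnil-injective.

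For (2) $\Rightarrow$ (3), the step is essentially free: every strongly nonnil-injective module is nonnil-injective, so the hypothesis forces every $R$-module to be nonnil-injective, and then \cite[Theorem 1.7]{ZQ22-p-dp} directly yields that $R$ is $\phi$-von Neumann regular.

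For (3) $\Rightarrow$ (2), I will first promote the $\phi$-ring hypothesis together with $\phi$-von Neumann regularity to the stronger assertion that $R$ is a $\ZN$-ring; this is the same argument already used in Theorem \ref{w-g-flat-0}. Since $R/\Nil(R)$ is a field by \cite[Theorem 4.1]{ZWT13}, for any non-nilpotent $a \in R$ there exists $b \in R$ with $(1-ab)^n = 0$ for some $n \geq 1$, so $ab$ is a unit in $R$ and hence $a$ is a non-zero-divisor; thus $\Z(R) = \Nil(R)$. Once $R$ is $\ZN$, Theorem \ref{sp-ss}(2) upgrades every nonnil-injective module to a strongly nonnil-injective module, and \cite[Theorem 1.7]{ZQ22-p-dp} supplies that every $R$-module is nonnil-injective; combining the two gives (2).

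There is no real obstacle here — the previous proof of Theorem \ref{w-g-flat-0} has already done the delicate work of extracting the $\ZN$ condition from $\phi$-von Neumann regularity and of bridging the ``strong'' and ``non-strong'' versions of the class via Theorem \ref{sp-ss}. The only point worth flagging is that one must invoke the $\ZN$ reduction explicitly, because the definition of $\phi$-von Neumann regular ring does not a priori require every non-nilpotent element to be regular; without this step one could not apply Theorem \ref{sp-ss}(2).
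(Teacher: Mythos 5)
Your proposal is correct and follows exactly the same route as the paper: (1) $\Leftrightarrow$ (2) by definition, (2) $\Rightarrow$ (3) via \cite[Theorem 1.7]{ZQ22-p-dp}, and (3) $\Rightarrow$ (2) by first establishing that $R$ is a $\ZN$-ring (the same unit argument from $(1-ab)^n=0$ used in Theorem \ref{w-g-flat-0}) and then combining Theorem \ref{sp-ss} with \cite[Theorem 1.7]{ZQ22-p-dp}. No differences worth noting.
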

\begin{proof} $(1)\Leftrightarrow (2)$ By definition.

$(2)\Rightarrow (3)$: It follow by  \cite[Theorem 1.7]{ZQ22-p-dp}.

$(3)\Rightarrow (2)$: Suppose $R$  is a $\phi$-von Neumann regular ring. Then  $R$ is a $\ZN$-ring by the proof of Theorem \ref{w-g-flat-0}. Now $(2)$ follows by Theorem \ref{sp-ss} and \cite[Theorem 1.7]{ZQ22-p-dp}.
\end{proof}

Recall from  \cite{FA05} that a $\phi$-ring $R$ is called a \emph{$\phi$-Dedekind} ring provided that any nonnil ideal of $R$ is $\phi$-invertible. It was proved in \cite[Theorem 2.5]{FA05} that a $\phi$-ring $R$ is a $\phi$-Dedekind ring if and only if $R/\Nil(R)$ is a Dedekind domain.

\begin{theorem}\label{w-g-injective-1}
Let $R$ be a $\phi$-ring. The following statements are equivalent for $R$:
\begin{enumerate}
    \item $\phi$-\cgd$(R)\leq 1$;
    \item every quotient module of injective $R$-module is strong $\phi$-injective;
      \item every quotient module of  strong $\phi$-injective $R$-module is strong $\phi$-injective;
    \item $R$  is a $\phi$-\Dedekind\ strong $\phi$-ring.
\end{enumerate}
\end{theorem}
\begin{proof} $(1)\Leftrightarrow (2)\Leftrightarrow (3)$ By Theorem \ref{w-g-injective}.

$(4)\Rightarrow (2)$: It follow by  \cite[Theorem 1.7]{ZQ22-p-dp}.

$(2)\Rightarrow (4)$: Suppose every quotient module of injective $R$-module is strong $\phi$-injective. We claim  every ideal of $R$ is strong $\phi$-flat. Indeed, let $I$ be an ideal of $R$. Then for any $\phi$-torsion $R$-module $T$ and positive integer $n$, we have $\Hom_{\mathbb{Z}}(\Tor_n^R(T,I),\mathbb{Q}/\mathbb{Z})\cong \Ext_R^n(T,\Hom_{\mathbb{Z}}(I,\mathbb{Q}/\mathbb{Z}))=0$ since $\Hom_{\mathbb{Z}}(I,\mathbb{Q}/\mathbb{Z})$ is a quotient module of the injective  $R$-module $\Hom_{\mathbb{Z}}(R,\mathbb{Q}/\mathbb{Z})$. Hence $\Tor_n^R(T,I)=0$, whence $I$  is strong $\phi$-flat. It follows by \cite[Corollary 2.8]{KMH22} that $R$ is a strong $\phi$-ring. Hence the result follows by  \cite[Theorem 1.7]{ZQ22-p-dp} and Theorem \ref{sp-ss}.
\end{proof}

\begin{corollary}
Let $D$ be an integral domain, $Q$ its quotient field and $V$ a $Q$-linear space. Then $D(+)V$ is a $\phi$-Dedekind ring if and only if $D$ is a Dedekind domain.
\end{corollary}
\begin{proof} Note that $D(+)V$ is a strong $\phi$-ring. So the result immediately follows by Proposition \ref{ideal-2}
 and Theorem \ref{w-g-injective-0}.
\end{proof}

\begin{remark} When  \cgd$(R/\Nil(R))\leq 1$, then Question \ref{op1} holds by Theorem \ref{w-g-injective-0} and Theorem \ref{w-g-injective-1}. The $\phi$-global dimensions of $\phi$-\Dedekind\ rings can be  large than $1$. Indeed, let $D$ be a \Dedekind\ domain and $Q$ its quotient field. Then $R=D(+)Q/D$ is a $\phi$-\Dedekind\ ring since $R/\Nil(R)\cong \mathbb{Z}$ is a \Dedekind\ domain. However, since $R$ is not a strongly $\phi$-ring, we have $\phi$-\cgd$(R)>1$.
\end{remark}

\begin{acknowledgement}\quad\\
The first author was supported by  the National Natural Science Foundation of China (No. 12061001),
the second author was supported by the Scientific Research Foundation of Chengdu University of Information Technology (No. KYTZ202015,2022ZX001), and the third author was supported by  the National Natural Science Foundation of China (No. 12201361)
\end{acknowledgement}

\bigskip

\end{document}